\theoremstyle{plain}
\newtheorem{theorem}{Theorem}[section]
\newtheorem{lemma}{Lemma}[section]
\newtheorem{proposition}{Proposition}[section]
\theoremstyle{definition}
\numberwithin{equation}{section}
\newcommand{\bfx}{\mathbf{x}}
\newcommand{\bfa}{\mathbf{a}}
\newcommand{\bfr}{\mathbf{r}}
\newcommand{\bff}{\mathbf{f}}
\newcommand{\bfp}{\mathbf{p}}
\newcommand{\bfg}{\mathbf{g}}
\newcommand{\bfh}{\mathbf{h}}
\newcommand{\Hu}{\mathcal{H}u}
\newcommand{\cH}{\mathcal{H}}
\newcommand{\dd}{\mathrm{\,d}}
\newcommand{\mR}{\mathbb{R}}
\newcommand{\oO}{\overline{\Omega}}
\newcommand{\oB}{\overline{B}}
\newcommand{\bben}{\mathbbm{1}}
\DeclareMathOperator{\tr}{tr}
\DeclareMathOperator{\di}{div}
\DeclareMathOperator{\dist}{dist}
\begin{document}

\author[\normalcolor K. K. Brustad]{Karl K. Brustad}
\title{The Infinity-Potential in the Square}
\address{Frostavegen 1691\\ 7633 Frosta\\ Norway}
\email{brustadkarl@gmail.com}

\begin{abstract}
A representation formula for the solution of the $\infty$-Laplace equation is constructed in a punctured square, the prescribed boundary values being $u=0$ on the sides and $u=1$ at the centre. This so-called $\infty$-potential is obtained with a hodograph method. The heat equation is used and one of Jacobi's Theta functions appears. The formula disproves a conjecture.
\end{abstract}

\maketitle


\section{Introduction}

I shall obtain a special explicit solution of the $\infty$-Laplace Equation
\[\Delta_\infty u := u_x^2u_{xx} + 2u_xu_yu_{xy} + u_y^2u_{yy} = 0\]
in the plane. The solution is the $\infty$-potential (or the "capacitary function") for a square with the centre removed, at which boundary point the value $u = 1$ is prescribed. On the four sides $u = 0$.

\begin{figure}[h]%
	\includegraphics[width=0.45\textwidth]{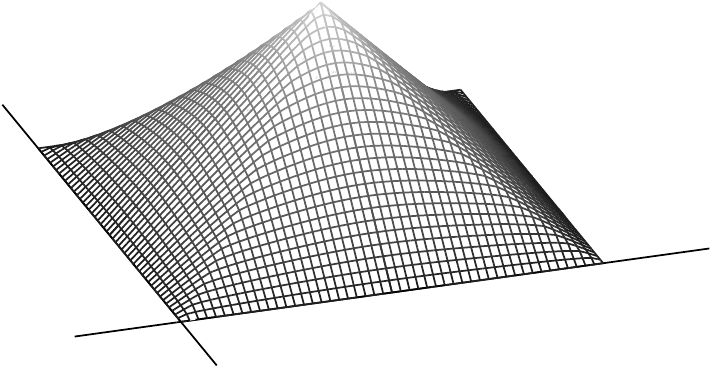}%
	\caption{The graph of the solution over the square.}%
	\label{fig:fullsolution}%
\end{figure}

Let $\Omega$ be the square $0<x<2,\,0<y<2$.
I shall show that the viscosity solution of the Dirichlet problem just described
is
\begin{equation}\label{eq:solution}
u(x,y) = \min_{\theta\in[0,\pi/2]}\max_{r\in[0,1]} \Big\{r(x\cos\theta + y\sin\theta) - W(r,\theta)\Big\}
\end{equation}
where
\begin{align*}
W(r,\theta)
	&= \frac{8}{\pi}\left(\frac{r^4}{6}\sin(2\theta) + \frac{r^{36}}{210}\sin(6\theta) + \frac{r^{100}}{990}\sin(10\theta) + \cdots\right).
\end{align*}
The formula is valid in the quadrant $0\leq x\leq 1$, $0\leq y\leq 1$,
and the obvious symmetries extends the solution to the whole $\oO$.
So far as we know, the only hitherto known explicit $\infty$-potentials are for stadium-like domains with solutions on the form $\dist(\bfx,\partial\Omega)$.

The explicit solution \eqref{eq:solution} settles a conjecture, indicated in \cite{MR1716563} and \cite{MR1886623}. A calculation shows that $u$ is \emph{not} a solution to the $\infty$-eigenvalue problem
\[\max\left\{\Lambda - \frac{|\nabla u|}{u},\; \Delta_\infty u\right\} = 0, \qquad u|_{\partial\Omega} = 0.\]
Se also \cite{MR2317552}. 

The $\infty$-Laplace equation $\Delta_\infty u = 0$ was introduced by G. Aronsson in 1967 as the limit of the $p$-Laplace equation
\[\Delta_p u := \di\left(|\nabla u|^{p-2}\nabla u\right) = 0\]
when $p\to\infty$. See \cite{MR217665}, \cite{MR237962}. For this \nth{2} order equation, the concept of viscosity solutions was introduced by T. Bhattacharya, E. DiBenedetto, and J. Manfredi in \cite{MR1155453}, and uniqueness of such solutions with continuous boundary values was proved by R. Jensen, cf. \cite{MR1218686}. For the $p$-Laplace equation, the capacitary problem in convex rings was studied by J. Lewis, \cite{MR477094}. A detailed investigation of this problem for the $\infty$-Laplace equation was given in \cite{1078-0947_2019_8_4731}, \cite{LINDGREN2021107526}.

I use the hodograph method, according to which a \emph{linear} equation is produced for a function $w = w(p,q)$ in the new coordinates
\[p = \frac{\partial u(x,y)}{\partial x},\qquad q = \frac{\partial u(x,y)}{\partial y}.\]
A solution $W(r,\theta) = w(r\cos\theta,r\sin\theta)$ is constructed with judiciously adjusted boundary values.
When transforming back we get a formula for $u$ in terms of its gradient, which, in turn, is shown to be a critical value of the objective function in \eqref{eq:solution}. The critical point -- that is, the obtained minimax $(r,\theta)$ -- is the length $r = |\nabla u|$ and the direction $\theta = \arg\nabla u$ of the gradient at $(x,y)$. 
A full account is given in Section \ref{sec:derive}.

It is interesting that the \nth{2} Jacobi Theta function
\[\vartheta_2(z,q) = 2\sum_{k=1}^\infty q^{(k-1/2)^2}\cos((2k-1)z)\]
appears in the formulas. For example, we shall see that
\[u = \frac{4}{\pi}\int_0^{\arg\nabla u}\vartheta_2\left(2\psi,|\nabla u|^{16}\right)\dd\psi,\qquad 0\leq\arg\nabla u\leq \pi/2,\]
and that the determinant of the Hessian matrix of $u$ is
\[\det\Hu = - \frac{\pi^2}{16}|\nabla u|^4\, \vartheta_2^{-2}\left(2\arg\nabla u,|\nabla u|^{16}\right),\qquad \arg\nabla u\neq (2k-1)\pi/4.\]
An immediate consequence of the last identity is that $u$ is not $C^2$ on the diagonals of the square. Indeed, the symmetries yield $\arg\nabla u(x,x) = \pi/4$ and $\vartheta_2$ is zero for odd integer multiples of $z=\pi/2$. 

Write
\[D_1 := \{(r,\theta)\;|\; 0<r<1,\,0<\theta<\pi/2\}.\]
Define $W\colon \overline{D}_1 \to\mR$,
as
\begin{equation}\label{eq:Wdef}
W(r,\theta)
:= \frac{8}{\pi}\sum_{n=1}^\infty \frac{r^{m_n^2}}{\left(m_n^2 - 1\right)m_n}\sin\left(m_n\theta\right),\qquad m_n = 4n-2,
\end{equation}
and let $u\colon\oO\to\mR$ be the extension of formula \eqref{eq:solution} to $\oO$.
I prove
\begin{theorem}\label{thm}
The function $u$ is the unique viscosity solution of the problem
\begin{equation}\label{eq:Dirprob1}
\begin{cases}
\Delta_\infty u = 0\qquad &\text{in $\Omega\setminus\{(1,1)\}$},\\
	u = 0 & \text{on $\partial\Omega$,}\\
	u = 1 &\text{at $(1,1)$,}
\end{cases}
\end{equation}
in the square $\Omega = \{(x,y)\;|\; 0<x<2,\,0<y<2\}$. Moreover, $u\in C^1(\oO\setminus\{(1,1)\})$ and $u$ is real-analytic, except at the diagonals and medians.
\end{theorem}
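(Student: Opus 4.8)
The plan is to recognise $u$ as the hodograph image of the explicit function $W$ of \eqref{eq:Wdef}. I would first record the computation, standard for the $\infty$-Laplacian, that if $u$ and its dual potential $w$ are linked by the partial Legendre transform $w(p,q) = xp + yq - u(x,y)$ with $(p,q) = \nabla u(x,y)$, so that $(x,y) = \nabla w(p,q)$, then $\Delta_\infty u = 0$ is equivalent to the linear equation $p^2 w_{qq} - 2pq\,w_{pq} + q^2 w_{pp} = 0$ on the set where $\det D^2 w \neq 0$. Passing to polar gradient coordinates $p = r\cos\theta$, $q = r\sin\theta$ and writing $W(r,\theta) = w(p,q)$, this becomes $W_{\theta\theta} + rW_r = 0$, which under $\tau = -\log r$ is the heat equation $W_\tau = W_{\theta\theta}$ on the strip $0 < \theta < \pi/2$, $\tau > 0$. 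Each term $r^{m^2}\sin(m\theta) = e^{-m^2\tau}\sin(m\theta)$ of \eqref{eq:Wdef} solves it, and the quadratic growth of the exponents $m_n^2$ gives termwise-differentiable convergence on $\overline{D}_1$ and real-analyticity on $D_1$, so $W$ solves $W_{\theta\theta} + rW_r = 0$ there. Summing the series is where Jacobi's $\vartheta_2$ enters, and along the way I would record the two identities quoted in the introduction — the $\vartheta_2$-integral for $u$ and $\det\Hu = -\tfrac{\pi^2}{16}|\nabla u|^4\vartheta_2^{-2}(2\arg\nabla u,|\nabla u|^{16})$ — since they control the regularity later. The mode restriction $m_n = 4n-2$ is forced by the boundary and symmetry requirements ($W = 0$ on $\theta = 0,\pi/2$ and $W(r,\pi/2-\theta) = W(r,\theta)$), and the coefficients $8/(\pi(m_n^2-1)m_n)$ are chosen so that the hodograph image is exactly the quarter-square.

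Next I would study the hodograph map $\Phi\colon\overline{D}_1\to\mR^2$ written in polar form as
\[\Phi(r,\theta) = \Big(W_r\cos\theta - \tfrac1r W_\theta\sin\theta,\; W_r\sin\theta + \tfrac1r W_\theta\cos\theta\Big),\]
i.e. the relations $x = w_p$, $y = w_q$. Using $\sin(m_n\cdot0) = \sin(m_n\tfrac\pi2) = 0$ and $W_\theta(r,\tfrac\pi4) = 0$ one checks the boundary correspondence: $\{r=0\}\mapsto(0,0)$; $\theta = 0$ and $\theta = \pi/2$ map onto the two sides $\{0\}\times[0,1]$ and $[0,1]\times\{0\}$ of $\partial\Omega$ meeting the quarter; $\{r=1\}$ maps onto the two medians, the point $(r,\theta) = (1,\tfrac\pi4)$ going to the centre $(1,1)$; and the diagonal $\theta = \pi/4$ maps onto $\{(t,t):0\le t\le1\}$. (The elementary identity $\tfrac8\pi\sum_n\tfrac1{m_n^2-1} = 1$ pins the corner values.) The Jacobian of $\Phi$ equals, up to the harmless factor $1/r$ and a sign, $\det D^2 w = 1/\det\Hu$, which by the $\vartheta_2$-identity is nonzero throughout $D_1$ except on $\{\theta=\pi/4\}$, where $\vartheta_2(\tfrac\pi2,\cdot) = 0$. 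I would then prove that $\Phi$ is a homeomorphism of $\overline{D}_1$ onto the closed quarter-square and a real-analytic diffeomorphism of $D_1\setminus\{\theta=\pi/4\}$ onto its image, combining the monotone boundary behaviour with a degree argument, or by handling the two halves $\theta<\pi/4$ and $\theta>\pi/4$ separately where the Jacobian has a fixed sign.

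With $\Phi$ invertible I would set $\nabla u := \Phi^{-1}$ on the quarter-square and define $u$ by integrating this vector field from the corner $(0,0)$; the integral is path-independent because $D\Phi^{-1} = (D^2 w)^{-1}$ is symmetric, and it gives $u = 0$ on the two sides of $\partial\Omega$ and $u(1,1) = 1$ (integrating $|\nabla u| = r$ along the diagonal up to $r=1$). Away from the diagonal and the medians the Legendre transform is genuinely invertible, so $\Delta_\infty u = 0$ there classically and $u$ is real-analytic since $\Phi^{-1}$ is; across the diagonal and medians $\nabla u = \Phi^{-1}$ still extends continuously, whence $u\in C^1$ there, and since $u$ is Lipschitz and $\infty$-harmonic off a closed one-dimensional set it remains $\infty$-harmonic across it (by removability of lower-dimensional sets for $\infty$-harmonic functions, or a direct comparison-with-cones check). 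To identify $u$ with \eqref{eq:solution}: for fixed $(x,y)$ the first-order conditions $\partial_r\phi = \partial_\theta\phi = 0$ for $\phi(r,\theta) = r(x\cos\theta+y\sin\theta) - W(r,\theta)$ are exactly $\Phi(r,\theta) = (x,y)$, the critical value is $xp + yq - w = u(x,y)$, and the sign pattern of $\phi_{rr} = -W_{rr}$ and $\phi_{\theta\theta}$ near the critical point — again governed by $\det D^2 w$ — makes this critical point the $\min_\theta\max_r$.

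Finally I would extend $u$ to $\oO$ by the reflections $x\mapsto2-x$, $y\mapsto2-y$, $x\leftrightarrow y$, which preserve $\Delta_\infty$; the $C^1$-matching across the medians and diagonals, already established within one quarter, makes the extension a global $C^1$ viscosity solution of \eqref{eq:Dirprob1}, real-analytic off the diagonals and medians. For uniqueness I would invoke Jensen's comparison principle on $\Omega\setminus\overline{B_\varepsilon(1,1)}$ and let $\varepsilon\to0$: the near-centre asymptotics $u(\bfx) = 1 - |\bfx-(1,1)| + o(|\bfx-(1,1)|)$, read off from $W$ as $r\to1$, together with cone comparison, force any viscosity solution of \eqref{eq:Dirprob1} to agree with $u$ on every $\partial B_\varepsilon$ up to $o(\varepsilon)$, hence everywhere, as in \cite{1078-0947_2019_8_4731}, \cite{LINDGREN2021107526}. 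I expect the \textbf{main obstacle} to be the global injectivity of $\Phi$ onto the quarter-square and the control of $u$ across the degeneracy locus $\{\vartheta_2(2\arg\nabla u,|\nabla u|^{16})=0\}$ — equivalently, that the minimax in \eqref{eq:solution} has a unique saddle with the correct second-order behaviour — since that is precisely where Legendre duality formally breaks down and where the loss of $C^2$ regularity is concentrated.
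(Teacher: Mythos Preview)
Your overall framework matches the paper's: hodograph transform, the linear equation $rW_r+W_{\theta\theta}=0$, the inverse map $\bfg=(\nabla w)^{-1}$, and the identification of $u$ with the Legendre-type expression $\bfx^\top\bfg(\bfx)-w(\bfg(\bfx))$. Two points, however, are genuine gaps rather than details to be filled in.

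\medskip
\textbf{The boundary image of $\{r=1\}$ is wrong.} You assert that the arc $\{r=1\}$ maps under $\nabla w$ onto the two medians, with only $(1,\pi/4)$ hitting the centre. In fact $W_r(1,\theta)=\cos\theta+\sin\theta$ and $W_\theta(1,\theta)=\cos\theta-\sin\theta$, so $\nabla w(\cos\theta,\sin\theta)=(1,1)$ for \emph{every} $\theta\in(0,\pi/2)$: the whole arc collapses to the single corner $(1,1)$, and the medians are \emph{not} in $\nabla w^\top(\partial B_1)$ at all. This breaks your degree argument for global injectivity as stated, since the boundary map is not a homeomorphism onto $\partial\Omega_1$. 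The paper proves bijectivity of $\nabla w^\top\colon B_1\to\Omega_1$ by a different route: it shows directly that for each $\bfx\in\Omega_1$ the objective $f_\bfx$ has a \emph{unique} interior critical point, by first establishing that $r\mapsto f_\bfx(r,\theta)$ has a unique maximum (strict concavity, $W_{rr}>0$) and then that the resulting profile $\theta\mapsto h_\bfx(\theta)$ is strictly convex. The regularity of $u$ across the medians is then obtained not from the hodograph map but from the squeeze $1-|(1,1)-\bfx|\le u(\bfx)\le\dist(\bfx,\partial\Omega)$, which forces $u$ and $\nabla u$ to match the cone there.

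\medskip
\textbf{Removability across the diagonal does not apply.} You propose to cross the diagonal by ``removability of lower-dimensional sets for $\infty$-harmonic functions, or a direct comparison-with-cones check.'' A line segment in the plane is codimension one and is \emph{not} removable for $\infty$-harmonic functions in general, even for Lipschitz or $C^1$ functions; no such theorem is available to cite. The paper instead carries out the viscosity verification at the diagonal by hand, and this requires two nontrivial facts that your outline does not supply: (i) along rays perpendicular to the diagonal, the second derivative of $u$ tends to $-\infty$, so no $C^2$ function can touch $u$ from below there; (ii) along the diagonal itself, $s\mapsto u(s\bben)$ is strictly convex (because $g'(s)=1/W_{rr}>0$), so any $C^2$ function $\phi$ touching from above has $\Delta_\infty\phi(\bfx_0)\ge g(s_0)^2g'(s_0)>0$. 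Both computations rest on the explicit $\cH w$ formula and on $W_{rr}>0$ (via the heat-equation maximum principle), and together they are exactly the ``correct second-order behaviour of the saddle'' that you flag as the main obstacle. A bare comparison-with-cones check would have to reproduce this analysis; it is not a shortcut around it.
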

Actually, we do not know how smooth $u$ is across the medians.

\section{Proof of the Theorem}

The coefficients in $W$ and the exponents on $r$ are chosen so that $W(1,\theta)$ is the Fourier series for the odd $\pi$-periodic extension of $\cos\theta + \sin\theta - 1$, and so that $rW_r + W_{\theta\theta} = 0$. Thus, $W$ is the solution of the problem
\begin{equation}\label{eq:PDEpolar}
\begin{cases}
rW_r + W_{\theta\theta} = 0\qquad &\text{in $D_1$},\\
W(r,0) = W(r,\pi/2) = 0, & 0 \leq r\leq 1,\\
W(0,\theta) = 0,\\
W(1,\theta) = \cos\theta + \sin\theta - 1, & 0\leq\theta\leq\pi/2.
\end{cases}
\end{equation}

I prove first that \eqref{eq:solution} is the viscosity solution to the Dirichlet problem
\begin{equation}\label{eq:Dirproblem2}
\begin{cases}
\Delta_\infty u = 0\qquad &\text{in $\Omega_1$},\\
u(0,t) = u(t,0) = 0, &\\
u(1,t) = u(t,1) = t &\text{for $0\leq t\leq 1$,}
\end{cases}
\end{equation}
where
\[\Omega_1 := \{(x,y)\;|\; 0<x<1,\,0<y<1\}.\]
We know in advance that the solution must be linear on the medians.
It is then straight forward to show that the gluing \eqref{eq:full_solution} of the translations and rotations of \eqref{eq:solution} is the solution of the original problem \eqref{eq:Dirprob1}.

\begin{figure}[h]%
	\center
	\includegraphics{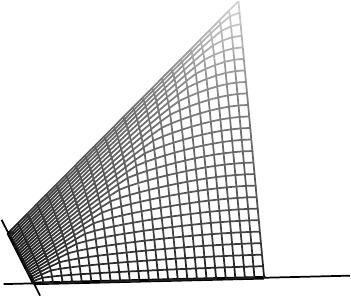}%
	\caption{The solution to problem \eqref{eq:Dirproblem2} over the square $\oO_1 = [0,1]^2$.}%
	\label{fig:omega1sol}%
\end{figure}

For each $(x,y)\in \oO_1$ I denote the \emph{objective function} $f_{(x,y)}\colon \overline{D}_1\to\mR$ in \eqref{eq:solution} by
\[f_{(x,y)}(r,\theta) := r(x\cos\theta + y\sin\theta) - W(r,\theta).\]
Note that we immediately have
\[u(x,y) \geq \min_{\theta\in[0,\pi/2]}f_{(x,y)}(0,\theta) = 0.\]
The bounds in the Proposition below show that the correct boundary values are obtained.

\begin{proposition}\label{prop:bounds_double}
For all $(x,y)\in\oO_1$,
\[1 - \sqrt{(1-x)^2 + (1-y)^2} \leq u(x,y) \leq \dist( (x,y),\partial\Omega).\]
\end{proposition}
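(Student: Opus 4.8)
The plan is to establish the two inequalities separately, both by exhibiting explicit competitors in the minimax formula \eqref{eq:solution}. For the upper bound, recall that $u(x,y) = \min_\theta \max_r f_{(x,y)}(r,\theta)$, so it suffices to produce, for each $(x,y)$, a \emph{single} direction $\theta_0$ making $\max_{r\in[0,1]} f_{(x,y)}(r,\theta_0)$ small. Since $W(r,\theta)\geq 0$ on $\overline{D}_1$ (the coefficients and the maximum principle for \eqref{eq:PDEpolar}, or just termwise estimates, give $0\le W(r,\theta)\le W(1,\theta)=\cos\theta+\sin\theta-1$), dropping $W$ gives $f_{(x,y)}(r,\theta)\le r(x\cos\theta+y\sin\theta)\le r\cdot(x\cos\theta+y\sin\theta)$, whose max over $r\in[0,1]$ is $x\cos\theta+y\sin\theta$ when that quantity is nonnegative. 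Choosing $\theta_0=0$ yields $u\le x$ and $\theta_0=\pi/2$ yields $u\le y$; by symmetry of the boundary data (and the reflection $\theta\mapsto\pi/2-\theta$ together with translation) one also gets $u\le 1-x$ and $u\le 1-y$ on the relevant pieces. Taking the minimum of the available bounds reproduces $\dist((x,y),\partial\Omega)$, since on $\oO_1$ that distance is $\min\{x,y,\ldots\}$ — here one must be a little careful that the distance to $\partial\Omega$ (the full square $\Omega$), not to $\partial\Omega_1$, is what appears, but since $(x,y)\in\oO_1$ lies in a corner region the nearest side of $\Omega$ is always $x=0$ or $y=0$, so $\dist((x,y),\partial\Omega)=\min\{x,y\}$ and the bound $u\le\min\{x,y\}$ already suffices.

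For the lower bound I would instead fix $r$ and bound the inner max from below, then the outer min. Concretely, for any fixed $\theta$,
\[
\max_{r\in[0,1]} f_{(x,y)}(r,\theta)\;\ge\; f_{(x,y)}(1,\theta)\;=\;x\cos\theta+y\sin\theta-(\cos\theta+\sin\theta-1)\;=\;1-(1-x)\cos\theta-(1-y)\sin\theta,
\]
using $W(1,\theta)=\cos\theta+\sin\theta-1$ from \eqref{eq:PDEpolar}. Therefore
\[
u(x,y)\;\ge\;\min_{\theta\in[0,\pi/2]}\Big(1-(1-x)\cos\theta-(1-y)\sin\theta\Big)\;=\;1-\max_{\theta\in[0,\pi/2]}\big((1-x)\cos\theta+(1-y)\sin\theta\big).
\]
On $\oO_1$ we have $1-x\ge 0$ and $1-y\ge 0$, so the maximum over $\theta\in[0,\pi/2]$ of the linear combination $(1-x)\cos\theta+(1-y)\sin\theta$ is attained at the angle $\arctan\frac{1-y}{1-x}$ and equals $\sqrt{(1-x)^2+(1-y)^2}$ (the Cauchy–Schwarz / amplitude identity for $a\cos\theta+b\sin\theta$ with $a,b\ge 0$). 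This gives exactly $u(x,y)\ge 1-\sqrt{(1-x)^2+(1-y)^2}$.

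The estimates above are essentially one-line computations once the two facts $0\le W\le W(1,\cdot)$ and $W(1,\theta)=\cos\theta+\sin\theta-1$ are in hand, and both of these are immediate from the construction \eqref{eq:Wdef}–\eqref{eq:PDEpolar}. The only genuinely delicate point is the bookkeeping for the upper bound: one must check that as $(x,y)$ ranges over all of $\oO_1$ the family of competitor directions $\theta_0\in\{0,\pi/2\}$ (augmented, if needed, by the symmetry-reflected competitors) always produces an upper bound matching $\dist((x,y),\partial\Omega)$, i.e. that the "active" side of the big square $\Omega$ for a point of the small square $\oO_1$ is one whose distance is captured by a pure $\theta_0\in\{0,\pi/2\}$ choice. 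This is where I would be most careful, but it reduces to the elementary observation that $\dist((x,y),\partial\Omega)=\min\{x,y\}$ for $(x,y)\in[0,1]^2$; no optimization over $r$ or $\theta$ beyond the endpoint choices is required. I do not expect any real obstacle — the proposition is a soft sandwiching that sets up the boundary-value verification, not a hard estimate.
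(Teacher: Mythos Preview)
Your proposal is correct and follows essentially the same route as the paper: the lower bound via the competitor $r=1$ together with $W(1,\theta)=\cos\theta+\sin\theta-1$, and the upper bound via $W\ge 0$ and then minimizing $x\cos\theta+y\sin\theta$ over $\theta\in[0,\pi/2]$ (the paper writes this minimum directly as $\min\{x,y\}$, while you pick the endpoint directions $\theta_0\in\{0,\pi/2\}$, which is the same thing since the minimum of a concave function on an interval is at an endpoint). The digression about also obtaining $u\le 1-x$ and $u\le 1-y$ is unnecessary, as you yourself note, and the ``termwise estimates'' remark for $W\ge 0$ is not quite right (the sines change sign), but the maximum-principle justification you also mention is exactly what the paper uses via Lemma~\ref{lem:1}.
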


\begin{proof}
\begin{align*}
u(x,y) &= \min_{\theta\in[0,\pi/2]}\max_{r\in[0,1]}f_{(x,y)}(r,\theta)\\
       &\geq \min_{\theta\in[0,\pi/2]}f_{(x,y)}(1,\theta)\\
       &= \min_{\theta\in[0,\pi/2]}\left\{x\cos\theta + y\sin\theta - (\cos\theta + \sin\theta - 1)\right\}\\
       &= 1 - \max_{\theta\in[0,\pi/2]}\left\{ (1-x)\cos\theta + (1-y)\sin\theta\right\}\\
       &= 1 - \sqrt{(1-x)^2 + (1-y)^2}\max_{\theta\in[0,\pi/2]}\cos\left(\theta - \phi\right)
\end{align*}
where the last line is due to a standard trigonometric identity. The lower bound is confirmed, since the maximum is at $\theta = \phi := \arctan\frac{1-y}{1-x}\in[0,\pi/2]$.

Since $W\geq 0$ (see Lemma \ref{lem:1}),
\begin{align*}
u(x,y) &= \min_{\theta\in[0,\pi/2]}\max_{r\in[0,1]}f_{(x,y)}(r,\theta)\\
       &\leq \min_{\theta\in[0,\pi/2]}\max_{r\in[0,1]}\left\{r(x\cos\theta + y\sin\theta)\right\}\\
       &= \min_{\theta\in[0,\pi/2]}\left\{x\cos\theta + y\sin\theta\right\}\\
	   &= \min\left\{x,y\right\} = \dist( (x,y),\partial\Omega).
\end{align*}
\end{proof}

%
%



I prove next that $u$ is $\infty$-harmonic in the viscosity sense in $\Omega_1$.
The strategy is to first
show that $u$ is a classical solution in $\Omega_1$ except on the diagonal. At the diagonal it is necessary to demonstrate that the smoothness breaks down in such a way that no test function can touch $u$ from below. Yet, it is essential to know that $u$ is $C^1$, because we need the gradient of a test function to align with the diagonal when the touching is from \emph{above}. The proof is then completed by showing that $x\mapsto u(x,x)$ is convex.

It turns out that the Heat Equation is helpful.
Consider an insulated rod of length $\pi/2$ with initial temperature $v(0,\theta)=1$, for $0<\theta<\pi/2$. Suppose the rod is subjected to the heat equation $v_t = v_{\theta\theta}$ with boundary conditions $v(t,0) = v(t,\pi/2) = 0$ when $t>0$. The solution of this textbook example is
\[v(t,\theta) = \frac{8}{\pi}\sum_{n=1}^\infty\frac{e^{-m_n^2 t}}{m_n}\sin(m_n\theta),\qquad m_n = 4n-2.\]
It becomes
\begin{equation}\label{eq:Udef}
U(r,\theta) := rW_r(r,\theta) - W(r,\theta) = \frac{8}{\pi}\sum_{n=1}^\infty\frac{r^{m_n^2}}{m_n}\sin(m_n\theta)
\end{equation}
under the substitution
\[r = e^{-t}.\]
Since $W\in C(\overline{D}_1)$ with $W(1,0) = W(1,\pi/2) = 0$, an immediate consequence is that $W_r(r,\theta)$ has jumps from 0 to 1 at the two corner points $r=1$, $\theta = 0$ and $r=1$, $\theta = \pi/2$. Otherwise it is continuous up to the boundary $\partial D_1$.

The temperature $v(t,\theta)$ is \emph{strictly} decreasing. To see this note that $h := v_t$ is again caloric with initial values $h(0,\theta)\leq 0$ and lateral values $h(t,0) = h(t,\pi/2) = 0$. The strong maximum principle then yields $h<0$ at inner points, since certainly $h\not\equiv 0$. Since $U_r = rW_{rr}$ it follows that
\[0> v_t = U_r\frac{\dd r}{\dd t}  = - r^2W_{rr}\]
and $W_{rr}>0$ in $D_1$. Integrating back, we see that also $W_r$ and $W$ are positive in $D_1$. Moreover, $U_{\theta\theta} = v_{\theta\theta} = v_t < 0$ and
\[\theta\mapsto U_\theta(r,\theta) = rW_{r\theta}(r,\theta) - W_\theta(r,\theta) = \frac{8}{\pi}\sum_{n=1}^\infty r^{m_n^2}\cos(m_n\theta)\]
is therefore strictly decreasing for each $0<r<1$. Its zero is obviously at $\theta = \pi/4$.
The following Lemma is proved.
\begin{lemma}\label{lem:1}
Let $W\colon\overline{D}_1\to\mR$ be the series defined in \eqref{eq:Wdef}. 
\begin{enumerate}[(I)]
\item The functions $W$, $W_r$, and $W_{rr}$ are positive in $D_1$.
\item $W,W_\theta\in C\left(\overline{D}_1\right)$, and $W_r\in C\left(\overline{D}_1\setminus\{(1,0),(1,\pi/2)\}\right)$. $W_r$ is discontinuous at the two points $(r,\theta) = (1,0)$ and $(r,\theta) = (1,\pi/2)$.
\item For $0<r<1$,
\[U_\theta(r,\theta) = rW_{r\theta}(r,\theta) - W_\theta(r,\theta)\]
is positive when $0\leq\theta<\pi/4$, negative when $\pi/4<\theta\leq\pi/2$, and thus zero in $D_1$ precisely when $\theta=\pi/4$.
\end{enumerate}
\end{lemma}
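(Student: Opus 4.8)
The plan is to route everything through the heat-equation correspondence already visible in \eqref{eq:Udef}. Let $v(t,\theta)$ denote the solution of $v_t=v_{\theta\theta}$ on $0<\theta<\pi/2$ with $v(t,0)=v(t,\pi/2)=0$ for $t>0$ and $v(0,\theta)=1$; its Fourier series is the one displayed above, and the substitution $r=e^{-t}$ turns it into $v(t,\theta)=U(r,\theta)=rW_r(r,\theta)-W(r,\theta)$. Before using this I would record the elementary convergence facts: the coefficients of \eqref{eq:Wdef} are $O(m_n^{-3})$ and those of $W_\theta=\frac{8}{\pi}\sum_n\frac{r^{m_n^2}}{m_n^2-1}\cos(m_n\theta)$ are $O(m_n^{-2})$, both summable uniformly on $\overline{D}_1$, so $W,W_\theta\in C(\overline{D}_1)$ with the stated boundary values; while on any $\{\,r\le\rho<1\,\}$ every term-by-term derivative of \eqref{eq:Wdef} converges uniformly, so $W\in C^\infty(D_1)$ solves \eqref{eq:PDEpolar}, and similarly $v\in C^\infty((0,\infty)\times\mR)$ with $v\to1$ locally uniformly on $(0,\pi/2)$ as $t\to0^+$.

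\emph{Part (I).} The crux is that $v$ is strictly decreasing in $t$. I would first get $v_t\le0$ by comparison: for $\delta>0$ the function $(t,\theta)\mapsto v(t+\delta,\theta)-v(t,\theta)$ solves the heat equation, vanishes on the lateral sides, and at $t=0$ equals $v(\delta,\theta)-1\le0$ since $v\le1$ by the maximum principle; hence $v(t+\delta,\cdot)\le v(t,\cdot)$ for all $t\ge0$, and letting $\delta\to0^+$ gives $v_t\le0$. As $v_t$ again solves the heat equation and $v$ is not stationary, the strong maximum principle upgrades this to $v_t<0$ in $(0,\infty)\times(0,\pi/2)$. Via $r=e^{-t}$ this reads $0>v_t=-r^2W_{rr}$, so $W_{rr}>0$ in $D_1$; integrating in $r$ from the obvious values $W_r(0,\theta)=W(0,\theta)=0$ then yields $W_r>0$ and $W>0$ in $D_1$, the strictness coming from $W_{rr}>0$.

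\emph{Parts (III) and (II).} For (III): the initial datum $1$ and the boundary conditions are invariant under $\theta\mapsto\pi/2-\theta$, so uniqueness gives $v(t,\theta)=v(t,\pi/2-\theta)$ and hence $U_\theta(r,\pi/4)=v_\theta(t,\pi/4)=0$; by Part (I), $U_{\theta\theta}=v_{\theta\theta}=v_t<0$ in $D_1$, so for each $r\in(0,1)$ the map $\theta\mapsto U_\theta(r,\theta)$ is strictly decreasing on $[0,\pi/2]$, which together with the zero at $\pi/4$ is exactly the claimed sign pattern (the series $U_\theta=\frac{8}{\pi}\sum_n r^{m_n^2}\cos(m_n\theta)$ makes $U_\theta(r,0)>0>U_\theta(r,\pi/2)$ obvious). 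For (II): continuity of $W$ and $W_\theta$ on $\overline{D}_1$ is already in hand; on $0<r\le1$ we have $W_r=(U+W)/r$, where $W$ is continuous and $U(r,\theta)=v(-\log r,\theta)$ is continuous on $\overline{D}_1$ except at the corners $(1,0)$ and $(1,\pi/2)$, because $v$ is continuous on $[0,\infty)\times[0,\pi/2]$ away from $(0,0)$ and $(0,\pi/2)$; hence $W_r\in C(\overline{D}_1\setminus\{(1,0),(1,\pi/2)\})$, and $W_r\to0$ as $r\to0$ directly from the series. The jump is genuine: since $U(r,0)=W(r,0)=0$ for all $r$, one has $W_r\to0$ on approach to $(1,0)$ along $\theta=0$, whereas along $r=1$ with $\theta\to0^+$ one has $U(1,\theta)\to1$ (Fourier series of the constant $1$) and $W(1,\theta)=\cos\theta+\sin\theta-1\to0$, so $W_r\to1$; the corner $(1,\pi/2)$ is symmetric.

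The step needing the most care is the interface between the \emph{discontinuous} initial data of $v$ and the interior regularity used above: the comparison and strong-maximum-principle arguments should be run on slabs $[\varepsilon,T]\times[0,\pi/2]$ and then $\varepsilon\to0^+$, and the corner limits in Part (II) require that $v(t,\theta)\to1$ as $t\to0^+$ locally uniformly in $(0,\pi/2)$ (together with pointwise convergence up to the open sides). Once the uniform convergence of the series is in place this is all classical, and the remainder is bookkeeping.
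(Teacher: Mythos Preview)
Your proof is correct and follows essentially the same route as the paper: both exploit the heat-equation correspondence $U(e^{-t},\theta)=v(t,\theta)$, obtain $v_t<0$ via the (strong) maximum principle to get $W_{rr}>0$ and then $W_r,W>0$ by integration, deduce the sign of $U_\theta$ from $U_{\theta\theta}=v_t<0$ together with the zero at $\theta=\pi/4$, and read off the corner discontinuity of $W_r$ from the jump in the initial data of $v$. Your treatment is in fact a bit more careful than the paper's---you justify $v_t\le0$ through the difference $v(t+\delta,\cdot)-v(t,\cdot)$ rather than asserting initial values for $v_t$ directly, and you spell out the uniform convergence needed for $W,W_\theta\in C(\overline{D}_1)$---but the underlying argument is the same.
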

%

Let
\[B_1 := \left\{(p,q)\,|\, 0<p^2 + q^2<1,\; p>0,\,q>0\right\}\]
be the sector in the first quadrant and define $w\colon \oB_1\to\mR$ as $W$ in Cartesian coordinates, i.e., $w(r\cos\theta,r\sin\theta) := W(r,\theta)$.
I shift to vector notation
\[\bfx := [x,y]^\top\in\oO_1,\, \bfp := [p,q]^\top\in\oB_1,\, \bfr := [r,\theta]^\top\in\overline{D}_1,\]
and denote by $\Phi$ the coordinate transformation $\Phi\colon\overline{D}_1\to\oB_1$. i.e.,
\[\Phi(\bfr) = \Phi(r,\theta) := \begin{bmatrix}
r\cos\theta\\
r\sin\theta
\end{bmatrix} = \begin{bmatrix}
p\\
q
\end{bmatrix} = \bfp.\]
Now,
\[W(\bfr) = w(\Phi(\bfr))\]
and $\nabla W(\bfr) = \nabla w(\Phi(\bfr))\nabla\Phi(\bfr)$ where $\nabla W = [W_r, W_\theta]$, $\nabla w = [w_p, w_q]$, and
\[\nabla\Phi = \begin{bmatrix}
\nabla (r\cos\theta)\\
\nabla(r\sin\theta)
\end{bmatrix} =
\begin{bmatrix}
\cos\theta & -r\sin\theta\\
\sin\theta & r\cos\theta
\end{bmatrix}\]
is the Jacobian matrix of $\Phi$.

In the next Lemma, I establish that
\emph{for every $\bfx = [x,y]^\top\in\Omega_1$ the minimax in \eqref{eq:solution} is obtained at a unique point $\bfr = [r,\theta]^\top\in D_1$.}
The result is crucial because it allows us to set up a correspondence $\bfg\colon\Omega_1\to B_1$ as
\begin{equation}\label{eq:g_def}
\bfg(\bfx) = \bfg(x,y) := \begin{bmatrix}
r\cos\theta\\
r\sin\theta
\end{bmatrix} = \Phi(\bfr),\qquad \text{$[r,\theta]^\top\in D_1$ is \emph{the} minimax in \eqref{eq:solution}.}
\end{equation}

\begin{lemma}\label{lem:interior_minmax}
Fix $[x,y]^\top\in\Omega_1$. The gradient of the objective function
\[\begin{bmatrix}
r\\
\theta
\end{bmatrix}\mapsto f_\bfx(r,\theta)
	= r(x\cos\theta + y\sin\theta) - W(r,\theta)\]
	is zero at exactly one point in $\overline{D}_1$. This critical point is an \emph{interior} minimax.
\end{lemma}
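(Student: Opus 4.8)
The plan is to compute the gradient of $f_\bfx$ explicitly, locate its zeros, and then use the sign information from Lemma~\ref{lem:1} to classify the unique critical point as a saddle which is simultaneously the inner minimax. First I would write out the two equations
\[\partial_r f_\bfx(r,\theta) = x\cos\theta + y\sin\theta - W_r(r,\theta) = 0,\qquad
\partial_\theta f_\bfx(r,\theta) = r(-x\sin\theta + y\cos\theta) - W_\theta(r,\theta) = 0.\]
The boundary of $\overline{D}_1$ must be dispensed with first: on $r=0$ the $r$-derivative is $x\cos\theta+y\sin\theta>0$ (since $x,y>0$), on $\theta=0$ the $\theta$-derivative is $ry-W_\theta(r,0)=ry>0$, on $\theta=\pi/2$ it is $-rx<0$, and on $r=1$ one needs to rule out a critical point using $W_r(1,\theta)=\cos\theta+\sin\theta-1$, so $\partial_r f_\bfx(1,\theta)=(x-1)\cos\theta+(y-1)\sin\theta$, which can vanish but only when the other equation forces a contradiction — I expect the cleanest route is to show no boundary point can have \emph{both} partials zero. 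Hence any critical point lies in the open $D_1$.

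For existence and uniqueness in $D_1$ I would exploit the monotonicity built into Lemma~\ref{lem:1}. Fix $\theta$; since $r\mapsto W_r(r,\theta)$ is strictly increasing (because $W_{rr}>0$) from $W_r(0,\theta)=0$ up to $W_r(1^-,\theta)$, and we are told via Proposition~\ref{prop:bounds_double}'s computation that $W_r(1,\theta)=\cos\theta+\sin\theta-1\le x\cos\theta+y\sin\theta$ is not automatic — rather, I would instead solve $\partial_\theta f_\bfx=0$ along the curve where $\partial_r f_\bfx=0$. Concretely: the map $\bfr\mapsto\nabla f_\bfx(\bfr)=[x,y]^\top\nabla\Phi(\bfr)^{-\top}\cdot(\dots)$ — more usefully, note $\nabla f_\bfx(r,\theta)=\bfo$ is equivalent to $\nabla_\bfr\big(W(\bfr)\big) = \nabla_\bfr\big(r(x\cos\theta+y\sin\theta)\big)$, i.e. in Cartesian coordinates $\nabla w(\bfp) = [x,y]^\top$ where $\bfp=\Phi(\bfr)$. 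So the critical-point equation is simply $\nabla w(\bfp)=\bfx$, and uniqueness of $\bfr$ reduces to injectivity of $\nabla w\colon B_1\to\mR^2$ together with the fact that $\bfx\in\Omega_1$ lies in the image. Injectivity of $\nabla w$ follows if $w$ is strictly convex on $B_1$, which I would derive from the Hessian of $w$: translating $W_{rr}>0$ and the sign of $U_\theta = rW_{r\theta}-W_\theta$ from Lemma~\ref{lem:1}(I),(III) into Cartesian second derivatives should give $\det\nabla^2 w>0$ and $\tr\nabla^2 w>0$ in $B_1$. That the image of $\nabla w$ covers $\Omega_1$ I would get from a degree/continuity argument, tracking $\nabla w$ along $\partial B_1$: on the two straight edges $p=0$ and $q=0$ one gradient component vanishes and the other runs over $[0,1]$, and on the arc $r=1$ the gradient is $[\cos\theta,\sin\theta]^\top$ up to the corner jumps, so $\nabla w(\partial B_1)$ traces out $\partial([0,1]^2)=\partial\oO_1$, whence the open square $\Omega_1$ is in the image.

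Finally, to see the critical point is an \emph{interior minimax} — meaning $u(\bfx)=f_\bfx(r,\theta)=\min_\theta\max_r f_\bfx$ is attained there — I would check the second-order structure. For fixed $\theta$, $r\mapsto f_\bfx(r,\theta)$ has $\partial_{rr}f_\bfx = -W_{rr}<0$, so it is strictly concave and the inner $\max_r$ is attained at the unique $r=r(\theta)$ solving $W_r(r,\theta)=x\cos\theta+y\sin\theta$ (when such $r\in(0,1)$ exists; otherwise at an endpoint, but the bounds should keep us interior for $\bfx\in\Omega_1$). Then $\theta\mapsto g(\theta):=f_\bfx(r(\theta),\theta)$ has $g'(\theta)=\partial_\theta f_\bfx(r(\theta),\theta)$ by the envelope theorem, and $g'=0$ exactly at our critical $\theta$; computing $g''$ there via the envelope formula gives $g'' = \partial_{\theta\theta}f_\bfx - (\partial_{r\theta}f_\bfx)^2/\partial_{rr}f_\bfx$, which equals $\det\nabla^2 f_\bfx / \partial_{rr}f_\bfx$, and since $\det\nabla^2 f_\bfx = \det(-\nabla^2 W)\cdot(\text{change of variables}) $ has a sign making the critical point a saddle of $f_\bfx$ (one needs $\det\nabla^2 f_\bfx<0$ there, i.e. $-W$ contributes a saddle, which ties back to $\det\nabla^2 w>0$), we get $g''>0$, so $\theta$ is a strict local min of the inner-max profile, and by uniqueness a global one on $[0,\pi/2]$. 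The main obstacle I anticipate is the careful bookkeeping on $\partial\overline{D}_1$ and at the two corner singularities of $W_r$ — in particular ruling out the minimax escaping to $r=1$ or to $\theta\in\{0,\pi/2\}$ for $\bfx$ near $\partial\Omega_1$ — together with converting the polar sign conditions of Lemma~\ref{lem:1} into the Cartesian convexity of $w$ cleanly; the interior critical-point analysis itself is then essentially the envelope-theorem computation above.
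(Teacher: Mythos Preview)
Your central claim in the second paragraph---that $w$ is strictly convex on $B_1$, i.e.\ $\det\cH w>0$---is false, and this breaks the uniqueness argument. Computing the Hessian in polar form (as in Lemma~\ref{lem:Hw}) gives
\[
\cH w = (\nabla\Phi)^{-\top}\begin{bmatrix} W_{rr} & W_{r\theta}-\tfrac1r W_\theta \\ W_{r\theta}-\tfrac1r W_\theta & 0 \end{bmatrix}(\nabla\Phi)^{-1},
\]
whose $(2,2)$ entry vanishes precisely because of the PDE $rW_r+W_{\theta\theta}=0$; hence $\det\cH w = -r^{-2}\bigl(W_{r\theta}-\tfrac1r W_\theta\bigr)^2\le 0$ everywhere, with equality exactly on $\theta=\pi/4$. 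So $w$ is saddle-shaped, not convex, and injectivity of $\nabla w$ cannot be obtained this way. (There are also smaller slips: $W_r(1,\theta)=\cos\theta+\sin\theta$, not $\cos\theta+\sin\theta-1$, which is $W(1,\theta)$; with the correct value $\partial_r f_\bfx(1,\theta)=(x-1)\cos\theta+(y-1)\sin\theta<0$ strictly for $\theta\in(0,\pi/2)$, so nothing ``can vanish'' there. And on the arc $r=1$ the gradient $\nabla w$ is the constant $[1,1]$, not $[\cos\theta,\sin\theta]$.)

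The fix is already latent in your third paragraph, but you must run it globally rather than only at the critical point. Along the curve $r=r_\bfx(\theta)$ defined by $\partial_r f_\bfx=0$ one has $x\cos\theta+y\sin\theta=W_r$, and therefore $\partial_{\theta\theta}f_\bfx = -r(x\cos\theta+y\sin\theta)-W_{\theta\theta} = -(rW_r+W_{\theta\theta})=0$ for \emph{every} $\theta\in(0,\pi/2)$, not just at the critical one. Your envelope formula then yields
\[
h_\bfx''(\theta) = \frac{\det\nabla^2 f_\bfx}{\partial_{rr}f_\bfx} = -\frac{f_{r\theta}^2}{f_{rr}} = (r_\bfx'(\theta))^2\, W_{rr} \ge 0
\]
on the whole interval. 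Combined with the endpoint values $h_\bfx'(0)=y-1<0$ and $h_\bfx'(\pi/2)=1-x>0$ (after checking $r_\bfx$ extends continuously to the endpoints with value $1$, which uses the corner discontinuity of $W_r$), and the observation that $h_\bfx''$ is analytic and hence cannot vanish on an interval, one concludes that $h_\bfx'$ is strictly increasing and has a unique interior zero. This is exactly the paper's argument; the detour through convexity of $w$ is both unnecessary and incorrect.
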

Of course, the critical \emph{value} is $u(x,y)$ as defined in \eqref{eq:solution}. 

\begin{proof}
Let $\bfx = [x,y]^\top\in\Omega_1$ and assume first that $\theta\in(0,\pi/2)$. Then the partial derivative of the objective function
\[r\mapsto\frac{\partial}{\partial r}f_\bfx(r,\theta) = x\cos\theta + y\sin\theta - W_r(r,\theta)\]
is continuous up to the boundary (Lemma \ref{lem:1} (II))
with end-point values
	\[\frac{\partial}{\partial r}f_\bfx(0,\theta) = x\cos\theta + y\sin\theta - 0 > 0\]
	and
	\begin{align*}
	\frac{\partial}{\partial r}f_\bfx(1,\theta) &= x\cos\theta + y\sin\theta - W_{r}(1,\theta)\\
	&= x\cos\theta + y\sin\theta + W_{\theta\theta}(1,\theta)\\
	&= x\cos\theta + y\sin\theta - \cos\theta - \sin\theta\\
	&= -(1-x)\cos\theta - (1-y)\sin\theta < 0.
	\end{align*}
	Thus, $f_\bfx(\cdot,\theta)$ has an interior maximum for each fixed $\theta\in(0,\pi/2)$. It is unique since
	\[\frac{\partial^2}{\partial r^2}f_\bfx(r,\theta) = - W_{rr}(r,\theta)<0\]
	by Lemma \ref{lem:1} (I).
	It occurs at $r$ where
	\[0 = \frac{\partial}{\partial r}f_\bfx(r,\theta) = x\cos\theta + y\sin\theta - W_{r}(r,\theta)\]
	and by the implicit function theorem there is an analytic function $r_\bfx\colon(0,\pi/2)\to(0,1)$ so that
	\begin{equation}\label{eq:rx_relation}
	W_r(r_\bfx(\theta),\theta) = x\cos\theta + y\sin\theta.
	\end{equation}
	
	At $\theta = 0$ and $\theta = \pi/2$ we have $f_\bfx(r,0) = rx$ and $f_\bfx(r,\pi/2) = ry$ with maximum value $x$ and $y$, respectively, at $r = 1$. We thus extend the function $r_\bfx$ to the closed interval $[0,\pi/2]$ by defining $r_\bfx(0) = r_\bfx(\pi/2) = 1$.
	
	Observe that the limit
\[\lim_{\theta\to 0^+}W_r(r_\bfx(\theta),\theta) = \lim_{\theta\to 0^+}(x\cos\theta + y\sin\theta) = x>0\]
	exists. Since $W_r(r,0) = 0$ for all $0\leq r \leq 1$, the only possibility is that $r_\bfx(\theta)\to 1$. Recall that $W_r$ is not continuous at $r=1$, $\theta=0$ (Lemma \ref{lem:1} (II)). Similarly, $r_\bfx(\theta)\to 1$ also when $\theta\to\pi/2^-$ and we conclude that $r_\bfx$ is continuous up to the boundary.
	
	We can now write
	\[h_\bfx(\theta) := \max_{r\in[0,1]}f_\bfx(r,\theta) = 
	r_\bfx(\theta)(x\cos\theta + y\sin\theta) - W(r_\bfx(\theta),\theta),\quad \theta\in[0,\pi/2],
	\]
and any critical point of $f_\bfx$ must occur at $[r_\bfx(\theta),\theta]^\top$ for some $\theta\in[0,\pi/2]$.
	
	Note again that $h_\bfx$ is continuous and analytic in the interior.
	Differentiating yields
	\begin{align*}
	h_\bfx'(\theta) &= r_\bfx'(\theta)(x\cos\theta + y\sin\theta) + r_\bfx(\theta)(-x\sin\theta + y\cos\theta)\\
	&\quad{}- r_\bfx'(\theta)W_r(r_\bfx(\theta),\theta) - W_\theta(r_\bfx(\theta),\theta)\\
	&= r_\bfx(\theta)(-x\sin\theta + y\cos\theta) - W_\theta(r_\bfx(\theta),\theta)
	\end{align*}
	where the first and third terms disappeared by \eqref{eq:rx_relation}. Since $W_\theta(1,\theta) = -\sin\theta + \cos\theta$, the end-point values are $h_\bfx'(0) = y-1 < 0$ and $h_\bfx'(\pi/2) = -x + 1 > 0$, so $h_\bfx$ must have a minimum in the interior of the interval $[0,\pi/2]$.
	Next,
\begin{align*}
h_\bfx''(\theta)
	&= r_\bfx'(\theta)(-x\sin\theta + y\cos\theta) - r_\bfx(\theta)(x\cos\theta + y\sin\theta)\\
	&\quad{} - r_\bfx'(\theta)W_{r\theta}(r_\bfx(\theta),\theta)- W_{\theta\theta}(r_\bfx(\theta),\theta)
\end{align*}
and the second and fourth terms cancel because of \eqref{eq:rx_relation} and $-W_{\theta\theta} = rW_r$. Moreover, differentiating \eqref{eq:rx_relation} and rearranging yields
	\[- x\sin\theta + y\cos\theta - W_{r\theta}(r_\bfx(\theta),\theta) = r_\bfx'(\theta)W_{rr}(r_\bfx(\theta),\theta)\]
	and thus,
	\[h_\bfx''(\theta) = (r_\bfx'(\theta))^2W_{rr}(r_\bfx(\theta),\theta) \geq 0.\]
	This means that $h_\bfx'$ is strictly increasing since $W_{rr}>0$ and since $(r_\bfx')^2$ is analytic and can therefore not be zero over an interval of positive length. It follows that the minimum of $h_\bfx$ is unique and the proof is concluded.
\end{proof}


The explicit formulas for the gradient and Hessian matrix of $w(\bfp) = W(\Phi^{-1}(\bfp))$ are now needed.

\begin{lemma}\label{lem:Hw}
	In terms of $W$, the gradient $\nabla w = [w_p, w_q]$ and the Hessian matrix $\cH w = \begin{bmatrix}
	w_{pp} & w_{pq}\\
	w_{pq} & w_{qq}
	\end{bmatrix}$ of $w$ at $\bfp\in B_1$ are
	\[\nabla w(\bfp) = \left[\cos\theta\, W_r - \frac{1}{r}\sin\theta\, W_\theta,\;\sin\theta\, W_r + \frac{1}{r}\cos\theta\, W_\theta\right]\]
	and
	\[\cH w(\bfp)
	= (\nabla\Phi)^{-\top}\begin{bmatrix}
	W_{rr} & W_{r\theta} - \frac{1}{r}W_\theta\\
	W_{r\theta} - \frac{1}{r}W_\theta & 0
	\end{bmatrix}(\nabla\Phi)^{-1}
	\]
	at $\bfr = \Phi^{-1}(\bfp)$. 
\end{lemma}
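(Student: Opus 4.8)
The plan is to treat this purely as a change-of-variables computation: $w$ is defined by $w(\Phi(\bfr)) = W(\bfr)$, so the gradient and Hessian of $w$ are obtained by applying the chain rule to invert the transformation $\Phi$. First I would record that $\nabla\Phi(\bfr)$ is the Jacobian matrix written above (in which the first row is $\nabla(r\cos\theta)$ with the gradient taken in $(r,\theta)$), and note that $\det\nabla\Phi = r \neq 0$ on $B_1$, so $\Phi$ is a local analytic diffeomorphism there and $(\nabla\Phi)^{-1}$ exists. The relation $\nabla W(\bfr) = \nabla w(\Phi(\bfr))\,\nabla\Phi(\bfr)$ (already stated in the excerpt, with $\nabla W$ and $\nabla w$ as row vectors) gives immediately
\[\nabla w = \nabla W\,(\nabla\Phi)^{-1},\]
and substituting the explicit inverse
\[(\nabla\Phi)^{-1} = \begin{bmatrix}\cos\theta & \sin\theta\\ -\frac{1}{r}\sin\theta & \frac{1}{r}\cos\theta\end{bmatrix}\]
yields the claimed formula for $\nabla w(\bfp)$ after one line of arithmetic.

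For the Hessian I would differentiate the gradient identity a second time. Writing $u_i := \partial w/\partial p_i$, the chain rule gives $\cH w \cdot \nabla\Phi = \partial_\bfr(\nabla w\circ\Phi)$; more symmetrically, the standard formula for the Hessian under a change of variables $\bfp = \Phi(\bfr)$ is
\[\cH W(\bfr) = (\nabla\Phi)^\top\,\cH w(\Phi(\bfr))\,\nabla\Phi + \sum_{k} (\partial_k w)\,\cH\Phi_k(\bfr),\]
where $\Phi_1 = r\cos\theta$, $\Phi_2 = r\sin\theta$, and $\cH\Phi_k$ is the Hessian of the $k$-th component in the $(r,\theta)$ variables. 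Solving for $\cH w$,
\[\cH w = (\nabla\Phi)^{-\top}\Big(\cH W - (\partial_p w)\,\cH\Phi_1 - (\partial_q w)\,\cH\Phi_2\Big)(\nabla\Phi)^{-1},\]
so the entire task reduces to showing that the inner matrix $\cH W - w_p\,\cH\Phi_1 - w_q\,\cH\Phi_2$ equals
\[\begin{bmatrix} W_{rr} & W_{r\theta} - \tfrac1r W_\theta \\ W_{r\theta} - \tfrac1r W_\theta & 0\end{bmatrix}.\]
Here $\cH W = \begin{bmatrix} W_{rr} & W_{r\theta}\\ W_{r\theta} & W_{\theta\theta}\end{bmatrix}$, and the two component Hessians are
\[\cH\Phi_1 = \begin{bmatrix} 0 & -\sin\theta\\ -\sin\theta & -r\cos\theta\end{bmatrix},\qquad \cH\Phi_2 = \begin{bmatrix} 0 & \cos\theta\\ \cos\theta & -r\sin\theta\end{bmatrix}.\]

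The remaining verification is bookkeeping. The $(1,1)$ entry is immediate since both $\cH\Phi_k$ have vanishing $(1,1)$ entry, leaving $W_{rr}$. For the $(1,2)$ entry one computes $W_{r\theta} - w_p(-\sin\theta) - w_q(\cos\theta)$ and substitutes the formula for $\nabla w$ just derived; the $W_r$-terms cancel ($-\sin\theta\cos\theta + \sin\theta\cos\theta$ type cancellation) and the $W_\theta$-terms combine to $-\tfrac1r W_\theta$, as desired. For the $(2,2)$ entry one needs $W_{\theta\theta} - w_p(-r\cos\theta) - w_q(-r\sin\theta) = W_{\theta\theta} + r(\cos\theta\, w_p + \sin\theta\, w_q)$; substituting $\nabla w$ gives $r(\cos\theta\,w_p + \sin\theta\,w_q) = r W_r$ (the $W_\theta$-terms cancel), so the $(2,2)$ entry is $W_{\theta\theta} + rW_r$ — which vanishes precisely because $W$ solves $rW_r + W_{\theta\theta} = 0$ in $D_1$ by \eqref{eq:PDEpolar}. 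That last cancellation is the one genuinely nontrivial input; everything else is the mechanics of the chain rule, and I expect the only place to be careful is keeping the row/column conventions for $\nabla W$, $\nabla\Phi$, and the transposes in $(\nabla\Phi)^{-\top}\cdots(\nabla\Phi)^{-1}$ consistent throughout.
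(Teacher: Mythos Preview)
Your proposal is correct and follows essentially the same route as the paper: both derive $\nabla w = \nabla W\,(\nabla\Phi)^{-1}$ for the gradient, and for the Hessian both use the change-of-variables identity $\cH W = (\nabla\Phi)^\top\cH w\,\nabla\Phi + (\text{correction})$, solve for $\cH w$, and then observe that the $(2,2)$ entry of the inner matrix vanishes because $rW_r + W_{\theta\theta}=0$. The only cosmetic difference is that the paper packages the correction term as a single object $\nabla_{\nabla w}\nabla\Phi^\top$ before unpacking it, whereas you write it directly as $w_p\,\cH\Phi_1 + w_q\,\cH\Phi_2$; these are literally the same matrix, and the entrywise verifications agree.
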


Recall, $\Phi(\bfr) = \begin{bmatrix}
r\cos\theta\\
r\sin\theta
\end{bmatrix}$, $\nabla\Phi(\bfr) = \begin{bmatrix}
\cos\theta & -r\sin\theta\\
\sin\theta & r\cos\theta
\end{bmatrix}$
and thus
\[(\nabla\Phi)^{-1}(\bfr) = \frac{1}{r}\begin{bmatrix}
r\cos\theta & r\sin\theta\\
-\sin\theta & \cos\theta
\end{bmatrix}.\]

By writing out the product one can check that
\begin{equation}\label{eq:sec5_fullHw}
\cH w(\bfp)
= \frac{1}{r^2}\begin{bmatrix}
c^2 r^2W_{rr} - 2sc U_\theta & sc r^2W_{rr} + (c^2-s^2)U_\theta\\
sc r^2W_{rr} + (c^2-s^2)U_\theta & s^2 r^2W_{rr} + 2sc U_\theta
\end{bmatrix}
\end{equation}
where $U(r,\theta) = rW_r(r,\theta) - W(r,\theta)$, and $s = \sin\theta$ and $c = \cos\theta$.

\begin{proof}
	Applying the chain rule to the relation $W(\bfr) = w(\Phi(\bfr))$ yields
	\begin{align*}
	\nabla w(\Phi(\bfr))
	&= \nabla W(\bfr)(\nabla\Phi)^{-1}(\bfr)\\
	&= \frac{1}{r}[W_r, W_\theta]\begin{bmatrix}
	r\cos\theta & r\sin\theta\\
	-\sin\theta & \cos\theta
	\end{bmatrix}\\
	&= \left[\cos\theta\, W_r - \frac{1}{r}\sin\theta\, W_\theta,\;\sin\theta\, W_r + \frac{1}{r}\cos\theta\, W_\theta\right].
	\end{align*}
	
	Next, $\nabla W^\top(\bfr) = \nabla\Phi^\top(\bfr)\nabla w^\top(\Phi(\bfr))$ and
	$\cH W = \nabla\Phi^\top\cH w\nabla \Phi + \nabla_{\nabla w}\nabla\Phi^\top$. Thus,
	\begin{equation}\label{eq:Hw}
	\cH w = (\nabla\Phi)^{-\top}\left(\begin{bmatrix}
	W_{rr} & W_{r\theta}\\
	W_{r\theta} & W_{\theta\theta}
	\end{bmatrix} - \nabla_{\nabla w}\nabla\Phi^\top\right)(\nabla\Phi)^{-1}.
	\end{equation}
	The notation $\nabla_{\nabla w}\nabla\Phi^\top$ is short-hand for the Jacobian matrix of the vector field $\bfr\mapsto \nabla\Phi^\top(\bfr)\bfa$ ($\bfa\in\mR^2$ constant) evaluated at $\bfa = \nabla w^\top(\Phi(\bfr))$. Since
	\begin{align*}
	\nabla\left(\nabla\Phi^\top\bfa\right)
	&= \nabla\left(\begin{bmatrix}
	\cos\theta & \sin\theta\\
	-r\sin\theta & r\cos\theta
	\end{bmatrix}\begin{bmatrix}
	a\\ b
	\end{bmatrix}\right)\\
	&= \nabla\left(a\begin{bmatrix}
	\cos\theta\\ -r\sin\theta
	\end{bmatrix} + b\begin{bmatrix}
	\sin\theta\\ r\cos\theta
	\end{bmatrix}\right)\\
	&= a\begin{bmatrix}
	0 & -\sin\theta\\ -\sin\theta & -r\cos\theta 
	\end{bmatrix} + b\begin{bmatrix}
	0 & \cos\theta\\ \cos\theta & -r\sin\theta
	\end{bmatrix}
	\end{align*}
	we get
	\begin{align*}
	\nabla_{\nabla w}\nabla\Phi^\top
	&= \frac{1}{r}\begin{bmatrix}
	0 & -r\sin\theta\cos\theta W_r + \sin^2\theta W_\theta\\
	-r\sin\theta\cos\theta W_r + \sin^2\theta W_\theta & -r^2\cos^2\theta W_r + \sin\theta\cos\theta W_\theta
	\end{bmatrix}\\
	&\quad{} + \frac{1}{r}\begin{bmatrix}
	0 & r\sin\theta\cos\theta W_r + \cos^2\theta W_\theta\\
	r\sin\theta\cos\theta W_r + \cos^2\theta W_\theta & -r^2\sin^2\theta W_r - \sin\theta\cos\theta W_\theta
	\end{bmatrix}\\
	&= \begin{bmatrix}
	0 & \frac{1}{r}W_\theta\\
	\frac{1}{r}W_\theta & -r W_r
	\end{bmatrix}.
	\end{align*}
	Plugging this into \eqref{eq:Hw} gives the result as the lower right entry becomes $W_{\theta\theta} + rW_r = 0$.
\end{proof}

\begin{lemma}\label{lem:Dw}
The gradient
	\[\nabla w(\bfp) = \left[\cos\theta\, W_r - \frac{1}{r}\sin\theta\, W_\theta,\;\sin\theta\, W_r + \frac{1}{r}\cos\theta\, W_\theta\right]\]
is continuous up to the boundary $\partial B_1$ except at the points $[p,q] = [1,0]$ and $[p,q] = [0,1]$.

Moreover, $\nabla w^\top(B_1) \subseteq \Omega_1$, $\nabla w^\top(\partial B_1) \subseteq \partial\Omega_1$, and if the limit
\begin{equation}\label{eq:sec5boundary_map}
\bfx_* := \lim_{j\to \infty}\nabla w^\top(\bfp_j)
\end{equation}
exists for some sequence $\bfp_j\in B_1$ converging to a boundary point $\bfp_*\in\partial B_1$, then $\bfx_*\in\partial\Omega_1$.
\end{lemma}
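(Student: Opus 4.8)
\emph{Proof strategy.} The plan is to deduce everything from Lemma~\ref{lem:1} and Lemma~\ref{lem:Hw}, working with the explicit formula
\[
\nabla w^\top(\bfp)=\Bigl[\cos\theta\,W_r-\tfrac{1}{r}\sin\theta\,W_\theta,\;\sin\theta\,W_r+\tfrac{1}{r}\cos\theta\,W_\theta\Bigr]^\top,\qquad [r,\theta]^\top=\Phi^{-1}(\bfp).
\]
For the continuity statement I would argue as follows. Away from the origin the factor $1/r$ is bounded, $W_\theta$ is continuous on all of $\overline{D}_1$, and $W_r$ is continuous on $\overline{D}_1$ except at the two corners $(1,0)$ and $(1,\pi/2)$, which $\Phi$ carries to $[1,0]^\top$ and $[0,1]^\top$; hence $\nabla w$ is continuous on $\overline{B}_1$ minus the origin and these two points. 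At the origin, the lowest power of $r$ appearing in the series \eqref{eq:Wdef} is $r^4$, so both $W_r$ and $W_\theta/r$ tend to $0$ as $r\to0^+$, uniformly in $\theta$, and $\nabla w$ extends continuously there with value $\bfo$. That $\nabla w$ genuinely fails to be continuous at $[1,0]^\top$ and $[0,1]^\top$ then follows by comparing the limit along the arc $r=1$ with the limit along the edge $\theta=0$ (resp. $\theta=\pi/2$), once the boundary values below are computed.

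The heart of the matter is the inclusion $\nabla w^\top(B_1)\subseteq\Omega_1$. Fix $\bfp\in B_1$, put $[r,\theta]^\top=\Phi^{-1}(\bfp)\in D_1$, and write $x,y$ for the two components of $\nabla w^\top(\bfp)$. First I would record the reflection symmetry $W(r,\pi/2-\theta)=W(r,\theta)$ — immediate from $\sin\!\bigl(m_n(\pi/2-\theta)\bigr)=\sin(m_n\theta)$ since $m_n=4n-2$ — which gives $w(p,q)=w(q,p)$ and hence $y(r,\theta)=x(r,\pi/2-\theta)$; thus it suffices to prove $0<x<1$. I would then pass to the auxiliary function $\rho:=rx=r\cos\theta\,W_r-\sin\theta\,W_\theta$. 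Using $W_{\theta\theta}=-rW_r$ one computes
\[
\partial_\theta\rho=\cos\theta\,(rW_{r\theta}-W_\theta)=\cos\theta\,U_\theta,
\]
so by Lemma~\ref{lem:1}(III) the map $\theta\mapsto\rho(r,\theta)$ increases on $[0,\pi/4]$ and decreases on $[\pi/4,\pi/2]$. Since $\rho(r,0)=0$ (because $W_r(r,0)=0$) and $\rho(r,\pi/2)=-W_\theta(r,\pi/2)=\tfrac{8}{\pi}\sum_n r^{m_n^2}/(m_n^2-1)>0$, this forces $\rho>0$ on $(0,\pi/2]$, i.e. $x>0$. For the upper bound I would look at $\sigma:=r-\rho$, whose $\theta$-derivative is $-\cos\theta\,U_\theta$, so $\sigma$ is minimised over $\theta$ at $\theta=\pi/4$; there $W_\theta(r,\pi/4)=0$, so $\rho(r,\pi/4)=\tfrac{r}{\sqrt2}W_r(r,\pi/4)$, and since $W_{rr}>0$ the function $r\mapsto W_r(r,\pi/4)$ increases on $(0,1)$ towards $W_r(1,\pi/4)=\cos(\pi/4)+\sin(\pi/4)=\sqrt2$. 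Hence $\rho(r,\pi/4)<r$, so $\sigma>0$ everywhere and $x<1$; the symmetry gives $0<y<1$ as well, and $\nabla w^\top(\bfp)\in\Omega_1$.

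Next I would compute the boundary values by direct substitution: on the arc $r=1$ one has $W_r(1,\theta)=\cos\theta+\sin\theta$ and $W_\theta(1,\theta)=\cos\theta-\sin\theta$, giving $\nabla w^\top=[1,1]^\top$; on the edge $\theta=0$ one has $W_r(r,0)=0$, so $\nabla w^\top=[0,\tfrac1r W_\theta(r,0)]^\top$ with $\tfrac1r W_\theta(r,0)=\tfrac8\pi\sum_n r^{m_n^2-1}/(m_n^2-1)$ increasing from $0$ at $r=0$ to $\tfrac8\pi\sum_n 1/(m_n^2-1)=\tfrac{8}{\pi}\cdot\tfrac{\pi}{8}=1$ at $r=1$ (partial fractions plus Leibniz' series); the edge $\theta=\pi/2$ is symmetric, and the origin maps to $\bfo$. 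All these points lie in $\partial\Omega_1$, which proves $\nabla w^\top(\partial B_1\setminus\{[1,0]^\top,[0,1]^\top\})\subseteq\partial\Omega_1$. For the last assertion, let $\bfp_j\in B_1$ converge to $\bfp_*\in\partial B_1$ with $\bfx_*=\lim_j\nabla w^\top(\bfp_j)$ existing; since each $\nabla w^\top(\bfp_j)\in\Omega_1$, $\bfx_*\in\oO_1$. If $\bfp_*\ne[1,0]^\top,[0,1]^\top$, continuity gives $\bfx_*=\nabla w^\top(\bfp_*)\in\partial\Omega_1$ by the previous step. If $\bfp_*=[1,0]^\top$, write $\bfp_j=[r_j\cos\theta_j,\,r_j\sin\theta_j]^\top$, so $r_j\to1$ and $\theta_j\to0$; since $W_\theta\in C(\overline{D}_1)$ while $0<W_r<\sqrt2$ on $D_1$ (because $W_{rr}>0$ and $W_r(1,\cdot)=\cos+\sin$), the $y$-component $\sin\theta_j\,W_r(r_j,\theta_j)+\tfrac{\cos\theta_j}{r_j}W_\theta(r_j,\theta_j)$ tends to $W_\theta(1,0)=1$, so $\bfx_*\in[0,1]\times\{1\}\subseteq\partial\Omega_1$; the case $\bfp_*=[0,1]^\top$ is symmetric.

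I expect the main obstacle to be organising the two-sided bound on $x$: the useful move is to pass to $\rho=rx$, to observe that its $\theta$-derivative is exactly $\cos\theta\,U_\theta$ so that Lemma~\ref{lem:1}(III) pins down the shape of $\rho(r,\cdot)$, and to reduce the sharp upper bound to the single scalar inequality $W_r(r,\pi/4)<\sqrt2$, which is just monotonicity in $r$. The only genuinely separate point is the behaviour at the two corner points $[1,0]^\top$ and $[0,1]^\top$, where $\nabla w$ is discontinuous and one must instead use $W_\theta\in C(\overline{D}_1)$ together with the boundedness of $W_r$ to control the limit.
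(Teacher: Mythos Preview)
Your proof is correct. The computations of $\partial_\theta\rho=\cos\theta\,U_\theta$, the boundary values on the arc and on the edges, the Leibniz-series evaluation $\tfrac1rW_\theta(r,0)\big|_{r=1}=1$, and the corner argument for sequences all check out.

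The one substantive difference from the paper is in the proof of $\nabla w^\top(B_1)\subseteq\Omega_1$. The paper works in Cartesian coordinates: having computed the boundary values of $w_q$ (namely $0$ on the $q$-axis and $1$ on the arc), it fixes $q_0$ and shows that $p\mapsto w_q(p,q_0)$ is strictly increasing by computing, from the explicit Hessian formula \eqref{eq:sec5_fullHw},
\[
w_{pq}=\sin\theta\cos\theta\,W_{rr}+\frac{\cos^2\theta-\sin^2\theta}{r^2}\,U_\theta>0,
\]
the two terms being positive by Lemma~\ref{lem:1}(I) and (III) respectively; hence $0<w_q<1$ strictly between the two boundary pieces, and symmetry handles $w_p$. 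Your route instead slices along circles of fixed $r$, reducing the two-sided bound to the single scalar inequality $W_r(r,\pi/4)<\sqrt2$ at the apex $\theta=\pi/4$. The paper's approach is perhaps cleaner in that it packages everything into the single sign condition $w_{pq}>0$ and avoids having to identify the extremal angle; your approach has the virtue of staying entirely in polar coordinates and not invoking the full Hessian formula from Lemma~\ref{lem:Hw}. Both rest on exactly the same ingredients from Lemma~\ref{lem:1}.
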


\begin{proof}
The boundary continuity follows from Lemma \ref{lem:1} (II). I calculate the boundary values of $w_q(p,q) = w_q(r\cos\theta,r\sin\theta)$. First, with $p=0$ and $0\leq q \leq 1 $, we have $\theta = \pi/2$ and
\[w_q(0,q) = W_r(q,\pi/2) - 0 = 0.\]
For $p^2 + q^2 = 1$ and $0<p\leq 1$ we have $r=1$, $0\leq\theta<\pi/2$ and
	\begin{align*}
	w_q(\cos\theta,\sin\theta)
	&= \sin\theta W_r(1,\theta) + \cos\theta W_\theta(1,\theta)\\
	&= \sin\theta(\cos\theta + \sin\theta) + \cos\theta (\cos\theta - \sin\theta)\\
	&= 1.
	\end{align*}
	Indeed, the factor $\sin\theta$ cancels the discontinuity of $W_r(1,\cdot)$ at $\theta = 0$.
	Finally, for $q=0$ we have $\theta = 0$ and
	\[w_q(p,0) = \frac{1}{p}W_\theta(p,0).\]
	This function is continuous, starts in $w_q(0,0) = 0$ and ends in $w_q(1,0) = 1$. It is monotone since
\begin{align*}
p^2w_{pq}(p,0)
	&= pW_{r\theta}(p,0) - W_{\theta}(p,0)\\
	&= U_\theta(p,0) > 0
\end{align*}
by Lemma \ref{lem:1} (III).	
The symmetry $w(p,q) = w(q,p)$ yields $w_p(p,q) = w_q(q,p)$ and the boundary values of $\nabla w$ are
\begin{align*}
\nabla w(0,q) &= \left[\frac{1}{q}W_\theta(q,0),\; 0\right], && 0\leq q \leq 1,\\
\nabla w(\sin\theta,\cos\theta) &= \left[1,1\right], && 0<\theta<\pi/2,\\
\nabla w(p,0) &= \left[0,\;\frac{1}{p}W_\theta(p,0)\right], && 0\leq p \leq 1.
\end{align*}
Notice the jumps at $[0,1]^\top$ and $[1,0]^\top$, and that $\nabla w^\top(\partial B_1) \neq \partial\Omega_1$. The medians are missing. Nevertheless, $\nabla w^\top(\partial B_1) \subseteq \partial\Omega_1$.
	
Next, I show that $\nabla w^\top(B_1) \subseteq \Omega_1$. Let $[p_0,q_0]^\top\in B_1$. From the boundary values for $w_q$ computed above, the function $v(p) := w_q(p,q_0)$ is continuous, starts in 0 at $p=0<p_0$ and ends in 1 at $p = \sqrt{1-q_0^2}>p_0$. 
The formula \eqref{eq:sec5_fullHw} for the Hessian matrix $\cH w$ yields
\[v'(p) = w_{pq} = \sin\theta\cos\theta\, W_{rr} + \frac{\cos^2\theta - \sin^2\theta}{r^2}\,U_\theta\]
where $U_\theta = rW_{r\theta} - W_\theta$. By Lemma \ref{lem:1} (I) the first term is positive and by part (III) in the same Lemma, $U_\theta$ has the same sign as the factor $\cos^2\theta - \sin^2\theta = \cos(2\theta)$. Thus, $w_{pq}>0$ in $B_1$ and the function $v$ is strictly increasing. This means that $0<w_q(p_0,q_0)<1$, and by the symmetry of $w$, the same holds for $w_p$. It follows that $\nabla w^\top(p_0,q_0)\in\Omega_1$.
	
Finally, let $\bfp_j\to\bfp_*\in\partial B_1$. Unless $\bfp_* = [1,0]^\top$ or $\bfp_* = [0,1]^\top$ it is clear that $\lim_{j\to \infty}\nabla w^\top(\bfp_j) \in \partial\Omega_1$. However, $w_q$ is continuous at $\bfp_* = [1,0]^\top$ with $w_q(1,0) = 1$. Since $0< w_p< 1$ it follows that if the limit
\[\lim_{j\to \infty}\nabla w(\bfp_j) = \lim_{j\to \infty}[w_p(\bfp_j),w_q(\bfp_j)] = \bfx_*^\top = [x_*, y_*]\]
exists, then $y_* = 1$ and $0\leq x_*\leq 1$. That is, $\bfx_*\in\partial\Omega_1$.

Again, a symmetric argument holds for the case $\bfp_* = [0,1]^\top$.
\end{proof}

\begin{proposition}\label{prop:alternative_formula}
For interior points $\bfx\in\Omega_1$, the function \eqref{eq:solution} has the alternative formula
\begin{equation}\label{eq:alt_formula}
u(\bfx) = \bfx^\top\bfg(\bfx) - w(\bfg(\bfx)).
\end{equation}
Moreover, $\bfg\colon \Omega_1\to B_1$ is continuous and is the inverse of $\nabla w^\top\colon B_1\to \Omega_1$.
\end{proposition}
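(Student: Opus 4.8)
The plan is to read off formula \eqref{eq:alt_formula} directly from Lemma \ref{lem:interior_minmax}, then to check by a short linear-algebra computation that $\bfg$ inverts $\nabla w^\top$, and finally to establish continuity of $\bfg$; the last point is the only genuinely non-trivial one, because the singularity of $\cH w$ on the diagonal blocks a naive appeal to the inverse function theorem.

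First, fix $\bfx\in\Omega_1$ and let $\bfr=[r,\theta]^\top\in D_1$ be the unique minimax of $f_\bfx$ supplied by Lemma \ref{lem:interior_minmax}, so that $u(\bfx)=f_\bfx(r,\theta)$ and, by \eqref{eq:g_def}, $\bfg(\bfx)=\Phi(\bfr)$. Expanding in polar coordinates, $\bfx^\top\bfg(\bfx)=x\,r\cos\theta+y\,r\sin\theta=r(x\cos\theta+y\sin\theta)$, while $w(\bfg(\bfx))=w(\Phi(\bfr))=W(\bfr)=W(r,\theta)$ by the very definition of $w$. Hence $\bfx^\top\bfg(\bfx)-w(\bfg(\bfx))=r(x\cos\theta+y\sin\theta)-W(r,\theta)=f_\bfx(r,\theta)=u(\bfx)$, which is \eqref{eq:alt_formula}.

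Next I would identify $\bfg$ with the inverse of $\nabla w^\top$. The vanishing of $\nabla f_\bfx$ at $\bfr$ reads
\[
x\cos\theta+y\sin\theta=W_r(r,\theta),\qquad -x\sin\theta+y\cos\theta=\frac{1}{r}W_\theta(r,\theta),
\]
the second equation being $\partial_\theta f_\bfx=0$ divided by $r>0$. Solving this $2\times 2$ orthogonal system for $[x,y]$ gives precisely
\[
x=\cos\theta\,W_r-\frac{1}{r}\sin\theta\,W_\theta,\qquad y=\sin\theta\,W_r+\frac{1}{r}\cos\theta\,W_\theta,
\]
which by Lemma \ref{lem:Hw} is exactly $\bfx^\top=\nabla w(\Phi(\bfr))=\nabla w(\bfg(\bfx))$; thus $\nabla w^\top(\bfg(\bfx))=\bfx$ for every $\bfx\in\Omega_1$. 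Conversely, given $\bfp\in B_1$, set $\bfx:=\nabla w^\top(\bfp)$, which lies in $\Omega_1$ by Lemma \ref{lem:Dw}. Running the same computation backwards, $\bfr_0:=\Phi^{-1}(\bfp)\in D_1$ satisfies $\nabla f_\bfx(\bfr_0)=0$, so by the uniqueness of the critical point in Lemma \ref{lem:interior_minmax} it \emph{is} the minimax, whence $\bfg(\bfx)=\Phi(\bfr_0)=\bfp$. Together with $\bfg(\Omega_1)\subseteq B_1$ and $\nabla w^\top(B_1)\subseteq\Omega_1$ (Lemma \ref{lem:Dw}), this shows $\bfg$ and $\nabla w^\top$ are mutually inverse bijections between $\Omega_1$ and $B_1$.

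It then remains to prove that $\bfg$ is continuous, and I expect this to be the main obstacle. The direct route via the inverse function theorem fails: a computation with \eqref{eq:sec5_fullHw} gives $\det\cH w=-U_\theta^2/r^4$, which by Lemma \ref{lem:1}(III) vanishes exactly on the diagonal $\theta=\pi/4$, so $\nabla w^\top$ is not a local diffeomorphism there. Instead I would argue topologically: $\nabla w^\top\colon B_1\to\Omega_1$ is continuous (Lemma \ref{lem:Dw}), injective (it has the left inverse $\bfg$), and surjective, with $B_1,\Omega_1$ open in $\mR^2$, so Brouwer's invariance of domain forces $\nabla w^\top$ to be an open map, hence a homeomorphism, and $\bfg=(\nabla w^\top)^{-1}$ is continuous. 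An alternative that avoids invariance of domain is to observe that the function $h_\bfx$ of Lemma \ref{lem:interior_minmax} has a unique minimiser $\theta_\bfx$ depending continuously on $\bfx$ (uniqueness, compactness of $[0,\pi/2]$, and joint continuity of $h$), that $(\bfx,\theta)\mapsto r_\bfx(\theta)$ is continuous by the implicit function theorem applied to \eqref{eq:rx_relation}, and hence that $\bfg(\bfx)=\Phi\big(r_\bfx(\theta_\bfx),\theta_\bfx\big)$ is continuous.
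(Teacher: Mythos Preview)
Your derivation of \eqref{eq:alt_formula} and of the two identities $\nabla w^\top(\bfg(\bfx))=\bfx$ and $\bfg(\nabla w^\top(\bfp))=\bfp$ is correct and matches the paper's argument essentially line for line; you just write out the $2\times 2$ system explicitly where the paper invokes the chain rule $\nabla W=\nabla w\cdot\nabla\Phi$ in matrix form.

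The genuine difference is in the continuity of $\bfg$. The paper does \emph{not} use invariance of domain; instead it runs a sequential compactness argument: given $\bfx_j\to\bfx_*\in\Omega_1$, any subsequential limit $\bfp_*$ of $\bfp_j:=\bfg(\bfx_j)$ lies in $\oB_1$, and the boundary clause \eqref{eq:sec5boundary_map} of Lemma~\ref{lem:Dw} is invoked to force $\bfp_*\in B_1$ (since $\nabla w^\top(\bfp_{j_i})\to\bfx_*\in\Omega_1$, not $\partial\Omega_1$); then interior continuity of $\nabla w$ and its injectivity pin $\bfp_*=\bfg(\bfx_*)$. Your invariance-of-domain route is shorter and is certainly valid here --- $\nabla w^\top$ is continuous and injective on the open set $B_1\subset\mR^2$ --- and it makes the boundary part of Lemma~\ref{lem:Dw} unnecessary for this proposition. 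The paper's approach, by contrast, is more self-contained (no appeal to a nontrivial topological theorem) and, in proving it, the author has already recorded the boundary behaviour of $\nabla w$ that is needed anyway later in the paper. Your second alternative via continuity of the unique argmin is also sound, with the minor caveat that you should restrict $(\bfx,\theta)$ to a region where $\theta$ stays in a compact subinterval of $(0,\pi/2)$ before citing the implicit function theorem for $r_\bfx(\theta)$, since $W_r$ is discontinuous at the two corner points $(1,0)$ and $(1,\pi/2)$.
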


\begin{proof}
The formula for $u$ follows directly from the definition \eqref{eq:g_def} of $\bfg$:
\begin{align*}
u(\bfx) &= \min_{\theta\in[0,\pi/2]}\max_{r\in[0,1]}\left\{\bfx^\top\Phi(\bfr) - W(\bfr)\right\}\\
&= \min_{\theta\in[0,\pi/2]}\max_{r\in[0,1]}\left\{\bfx^\top\Phi(\bfr) - w(\Phi(\bfr))\right\}\\
&= \bfx^\top\bfg(\bfx) - w(\bfg(\bfx)),\qquad \bfx\in\Omega_1.
\end{align*}

By Lemma \ref{lem:interior_minmax}, the minimax of the objective function
\[f_\bfx(\bfr) = \bfx^\top\Phi(\bfr) - W(\bfr)\]
is attained at an interior critical point.
That is,
\begin{align*}
0 = \nabla f_\bfx(\bfr) &= \bfx^\top\nabla\Phi(\bfr) - \nabla W(\bfr)\\
  &= \left(\bfx^\top - \nabla w(\Phi(\bfr))\right)\nabla\Phi(\bfr)
\end{align*}
at $\Phi(\bfr) = \bfg(\bfx)$.
Thus,
\begin{equation}\label{eq:x_eq_nabla_w_g_x}
\bfx = \nabla w^\top(\bfg(\bfx))\qquad\text{when $\bfx\in\Omega_1.$}
\end{equation}
Next, let $\bfp\in B_1$. Then $\nabla w^\top(\bfp)\in\Omega_1$ by Lemma \ref{lem:Dw}, and from Lemma \ref{lem:interior_minmax} the solution $\bfr = \Phi^{-1}(\bfp)$ of the equation
\[0 = \nabla f_{\nabla w^\top(\bfp)}(\bfr) = \left(\nabla w(\bfp) - \nabla w(\Phi(\bfr))\right)\nabla\Phi(\bfr)\]
is unique. Since $\bfg(\nabla w^\top(\bfp)) := \Phi(\bfr)$, it follows that
\[\bfp = \bfg(\nabla w^\top(\bfp))\qquad \text{when $\bfp\in B_1.$}\]

It remains to confirm that $\bfg$ is continuous. To that end, let $\bfx_*\in\Omega_1$ and let $\bfx_j$ be an arbitrary sequence in $\Omega_1$ converging to $\bfx_*$ as $j\to\infty$. Define the sequence $\bfp_j := \bfg(\bfx_j)$ in $B_1$. By compactness there is a subsequence $\bfp_{j_i}$ and a point $\bfp_*\in\oB_1$ such that $\lim_{i\to \infty}\bfp_{j_i} = \bfp_*$. However, by \eqref{eq:sec5boundary_map} in Lemma \ref{lem:Dw}, $\bfp_*$ must be an interior point since the limit
\[\lim_{i\to \infty}\nabla w^\top(\bfp_{j_i}) = \lim_{i\to \infty}\nabla w^\top(\bfg(\bfx_{j_i})) = \lim_{i\to \infty}\bfx_{j_i} = \bfx_* \in\Omega_1\]
exists.

Since $\nabla w$ is smooth in $B_1$
we also have
\[\lim_{i\to \infty}\nabla w^\top(\bfp_{j_i}) = \nabla w^\top(\bfp_*),\]
and the inverse relation \eqref{eq:x_eq_nabla_w_g_x} yields
\[\nabla w^\top(\bfg(\bfx_*)) = \bfx_* = \nabla w^\top(\bfp_*).\]
The injectivety of $\nabla w$ in $B_1$ then implies
\[\bfg(\bfx_*) = \bfp_* = \lim_{i\to \infty}\bfg(\bfx_{j_i})\]
where the last equality is just the definition of $\bfp_*$. Thus, for every sequence $\bfx_j\to\bfx_*$ there is a subsequence $\bfx_{j_i}$ such that $\bfg(\bfx_{j_i})\to\bfg(\bfx_*)$, and it is proved that $\bfg$ is continuous at $\bfx_*$.
\end{proof}

Denote the diagonal in $\oO_1$ by $\delta := \{[x,x]^\top\,|\, 0\leq x\leq 1\}$.

\begin{proposition}\label{prop:realanal}
The function $\bfg$ is real-analytic in (each connected component of) $\Omega_1\setminus\delta$. So is $u(\bfx) = \bfx^\top\bfg(\bfx) - w(\bfg(\bfx))$.
\end{proposition}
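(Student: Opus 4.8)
The plan is to deduce the real-analyticity of $\bfg$ from the analytic implicit/inverse function theorem applied to the map $\nabla w^\top\colon B_1\to\Omega_1$, which by Proposition \ref{prop:alternative_formula} is a continuous bijection with continuous inverse $\bfg$. Since $w$ is real-analytic in $B_1$ (it is the composition $W\circ\Phi^{-1}$, and $W$ is given by a power series convergent in $D_1$ while $\Phi^{-1}$ is analytic away from the origin, with $B_1$ omitting the origin), the map $\nabla w^\top$ is real-analytic on $B_1$. By the analytic inverse function theorem, $\bfg = (\nabla w^\top)^{-1}$ is real-analytic on the open set where the Jacobian $\cH w$ is invertible, i.e.\ where $\det\cH w\neq 0$. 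So the whole proposition reduces to identifying the zero set of $\det\cH w$ inside $B_1$ and checking it corresponds, under $\bfg$, exactly to the diagonal $\delta$.

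The key computation uses formula \eqref{eq:sec5_fullHw}. With $s=\sin\theta$, $c=\cos\theta$, and abbreviating $A:=r^2W_{rr}>0$ and $E:=U_\theta$, one reads off
\[
\det\cH w = \frac{1}{r^4}\Big[(c^2A - 2scE)(s^2A + 2scE) - (scA + (c^2-s^2)E)^2\Big].
\]
Expanding, the $A^2$ term gives $s^2c^2 - s^2c^2 = 0$; the cross terms and the $E^2$ terms combine (using $\cos^2(2\theta) + \sin^2(2\theta)=1$ after collecting) to give simply $\det\cH w = -E^2/r^4 = -U_\theta(r,\theta)^2/r^4 \le 0$. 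Hence $\det\cH w = 0$ at an interior point $\bfp = \Phi(r,\theta)\in B_1$ precisely when $U_\theta(r,\theta)=0$, and by Lemma \ref{lem:1} (III) this happens exactly when $\theta=\pi/4$, that is, exactly on the (open) segment $\{[p,p]^\top\,|\,0<p<1/\sqrt2\}$. Thus $\nabla w^\top$ is a local analytic diffeomorphism on $B_1\setminus\{\theta=\pi/4\}$.

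It remains to transfer this to the $\bfx$-side. From the boundary values in Lemma \ref{lem:Dw} and the symmetry $w(p,q)=w(q,p)$, one has $w_p = w_q$ on the diagonal $\{\theta=\pi/4\}$ of $B_1$, so $\nabla w^\top$ maps that segment into $\delta$; conversely, by the symmetry of the whole construction (the objective function $f_\bfx$ for $\bfx\in\delta$ is symmetric under $\theta\mapsto \pi/2-\theta$, forcing the unique minimax of Lemma \ref{lem:interior_minmax} to have $\theta=\pi/4$), $\bfg$ maps $\Omega_1\cap\delta$ into $\{\theta=\pi/4\}$. Since $\bfg$ and $\nabla w^\top$ are mutually inverse homeomorphisms, $\bfg$ restricts to a homeomorphism from $\Omega_1\setminus\delta$ onto $B_1\setminus\{\theta=\pi/4\}$, on which it is analytic by the inverse function theorem, and the same holds on each connected component. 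Finally, $u(\bfx)=\bfx^\top\bfg(\bfx)-w(\bfg(\bfx))$ is a composition of analytic functions (polynomial in $\bfx$ and $\bfg(\bfx)$, and $w$ analytic) on $\Omega_1\setminus\delta$, hence analytic there.

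The main obstacle is purely bookkeeping in the determinant expansion in \eqref{eq:sec5_fullHw} — verifying the clean cancellation down to $-U_\theta^2/r^4$ — together with the (easy but necessary) symmetry argument pinning down that $\bfg$ sends $\delta$ to $\{\theta=\pi/4\}$ and nothing else, so that the bad set on the $\bfx$-side is exactly $\delta$.
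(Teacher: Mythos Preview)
Your proposal is correct and follows essentially the same route as the paper: invoke the analytic inverse function theorem for $\bfg=(\nabla w^\top)^{-1}$, compute $\det\cH w$ and identify its zero set in $B_1$ as $\{\theta=\pi/4\}$ via Lemma~\ref{lem:1}(III), then transfer this to the $\bfx$-side to see the bad set is exactly $\delta$. The only cosmetic differences are that the paper computes the determinant directly from the factored form in Lemma~\ref{lem:Hw} (yielding $\det\cH w=-r^{-2}(W_{r\theta}-\tfrac1r W_\theta)^2$, which equals your $-U_\theta^2/r^4$), and establishes the implication $\theta=\pi/4\Rightarrow x=y$ by evaluating $\partial_\theta f_{(x,y)}$ at $\theta=\pi/4$ rather than by the symmetry $w(p,q)=w(q,p)$ you use.
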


\begin{proof}
It was proved in Proposition \ref{prop:alternative_formula} that $\bfg\colon\Omega_1\to B_1$ is the inverse of the analytic function $\nabla w^\top\colon B_1\to\Omega_1$. By the inverse function theorem, $\bfg$ is analytic at $\bfx$ provided the Hessian matrix $\cH w$ of $w$ is non-singular at $\bfp = \bfg(\bfx)$.
By Lemma \ref{lem:Hw},
\[\cH w(\bfp)
= (\nabla\Phi)^{-\top}\begin{bmatrix}
W_{rr} & W_{r\theta} - \frac{1}{r}W_\theta\\
W_{r\theta} - \frac{1}{r}W_\theta & 0
\end{bmatrix}(\nabla\Phi)^{-1}
\]
at $\bfr = \Phi^{-1}(\bfp)$.
The determinant of $\cH w$ is then
\begin{equation}\label{eq:Hw_det}
\det\cH w = -r^{-2}\left(W_{r\theta} - \frac{1}{r}W_\theta\right)^2 \leq 0,
\end{equation}
which by Lemma \ref{lem:1} (III) is zero in $D_1$ only when $\theta = \pi/4$.
Next, one may easily check that $W_\theta(r,\pi/4)\equiv 0$, and if the minimax is at $(r_0,\pi/4)$ then
\begin{align*}
0 &= \frac{\partial}{\partial\theta}f_{(x,y)}(r_0,\theta)\Big|_{\theta=\pi/4}\\
  &= r_0(-x\sin\theta + y\cos\theta) - W_\theta(r_0,\theta)\Big|_{\theta=\pi/4}\\
  &= \frac{r_0}{\sqrt{2}}(y-x) - 0
\end{align*}
and $x = y$ as $r_0>0$. It follows that $x\neq y$ implies $\theta\neq\pi/4$ and hence $\bfg$ is real-analytic in $\Omega_1\setminus\delta$.
\end{proof}

It is easily seen that $rW_r + W_{\theta\theta} = 0$ in the $r,\theta$-plane. The corresponding equation in the $p,q$-variables is
\begin{equation}\label{eq:weq}
p^2w_{qq} - 2pqw_{pq} + q^2w_{pp} = 0.
\end{equation}

\begin{proposition}\label{prop:inftyharmonic_smooth}
The function \eqref{eq:solution} is $\infty$-harmonic in the smooth sense away from the diagonal. i.e.,
\[\nabla u(\bfx)\cH u(\bfx)\nabla u^\top(\bfx) = 0\qquad\text{for all $\bfx\in\Omega_1\setminus\delta$.}\]
\end{proposition}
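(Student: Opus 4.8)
The plan is to exploit the hodograph correspondence established in Propositions \ref{prop:alternative_formula} and \ref{prop:realanal}: on $\Omega_1\setminus\delta$ the map $\bfg$ is the analytic inverse of $\nabla w^\top$, and $u(\bfx) = \bfx^\top\bfg(\bfx) - w(\bfg(\bfx))$ is a Legendre-type transform of $w$. The fundamental fact is that under a Legendre transform the Hessian matrices are inverse to each other: since $\nabla u^\top(\bfx) = \bfg(\bfx)$ (this follows by differentiating \eqref{eq:alt_formula} and using $\bfx = \nabla w^\top(\bfg(\bfx))$, which makes the remaining terms cancel), we get $\cH u(\bfx) = \nabla\bfg(\bfx) = \left(\cH w(\bfp)\right)^{-1}$ at $\bfp = \bfg(\bfx)$, valid wherever $\cH w$ is non-singular, i.e. on $\Omega_1\setminus\delta$ by \eqref{eq:Hw_det} and Lemma \ref{lem:1}(III).

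Next I would translate the $\infty$-Laplacian into the $\bfp$-variables. Since $\nabla u^\top(\bfx) = \bfp$, the quantity $\nabla u\,\cH u\,\nabla u^\top$ equals $\bfp^\top (\cH w(\bfp))^{-1}\bfp$. For a $2\times 2$ matrix $A$ one has $A^{-1} = (\det A)^{-1}\begin{bmatrix} A_{22} & -A_{12}\\ -A_{12} & A_{11}\end{bmatrix}$, so with $A = \cH w$, $\bfp = [p,q]^\top$,
\[
\bfp^\top A^{-1}\bfp = \frac{p^2 w_{qq} - 2pq\, w_{pq} + q^2 w_{pp}}{\det\cH w}.
\]
The numerator is exactly the left-hand side of the linear hodograph equation \eqref{eq:weq}, which holds throughout $B_1$ (it is the $p,q$-form of $rW_r + W_{\theta\theta}=0$, already noted in the text and verifiable via Lemma \ref{lem:Hw} / formula \eqref{eq:sec5_fullHw}). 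Hence the numerator vanishes identically on $B_1$, while the denominator $\det\cH w$ is nonzero on $\bfg(\Omega_1\setminus\delta)$ by Proposition \ref{prop:realanal}. Therefore $\nabla u(\bfx)\cH u(\bfx)\nabla u^\top(\bfx) = 0$ for every $\bfx\in\Omega_1\setminus\delta$, as claimed.

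The only genuinely delicate point is the justification that $\cH u = (\cH w)^{-1}$ and that $\bfg = \nabla u^\top$ in the classical (not merely viscosity) sense. This is where I would be careful: $\bfg$ is analytic on $\Omega_1\setminus\delta$ (Proposition \ref{prop:realanal}), so $u$ is analytic there too, and differentiating \eqref{eq:alt_formula} is legitimate; the chain rule applied to the identity $\bfx = \nabla w^\top(\bfg(\bfx))$ gives $I = \cH w(\bfg(\bfx))\,\nabla\bfg(\bfx)$, i.e. $\nabla\bfg(\bfx) = (\cH w)^{-1}$, and since $\nabla\bfg = \cH u$ (once $\nabla u^\top = \bfg$ is established) the identification is complete. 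One should also record that $\cH w$ is symmetric, so $(\cH w)^{-1}$ is symmetric and indeed equals the Hessian of a function — consistent with $u\in C^2(\Omega_1\setminus\delta)$. After these verifications the computation above is entirely routine. A remark worth inserting: the same formula gives $\det\cH u = (\det\cH w)^{-1} = -r^2\big(W_{r\theta}-\tfrac1r W_\theta\big)^{-2}$, which, together with the $\vartheta_2$ evaluation mentioned in the introduction, exhibits the blow-up of $\cH u$ as $\bfx\to\delta$.
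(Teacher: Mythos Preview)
Your proof is correct and follows essentially the same route as the paper: differentiate the Legendre-type relation $u(\bfx)=\bfx^\top\bfg(\bfx)-w(\bfg(\bfx))$ on $\Omega_1\setminus\delta$ to obtain $\nabla u^\top=\bfg$ and $\cH u=(\cH w)^{-1}$, then compute $\nabla u\,\cH u\,\nabla u^\top=\bfp^\top(\cH w)^{-1}\bfp=(p^2w_{qq}-2pqw_{pq}+q^2w_{pp})/\det\cH w=0$ by \eqref{eq:weq} and the nonvanishing of $\det\cH w$ off the diagonal. Your added remarks on the regularity justifications and on $\det\cH u$ are welcome but do not change the argument.
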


\begin{proof}
By Proposition \ref{prop:alternative_formula} we have that $\bfx = \nabla w^\top(\bfg(\bfx))$ and $u(\bfx) = \bfx^\top\bfg(\bfx) - w(\bfg(\bfx))$, and everything is smooth in $\Omega_1\setminus\delta$ by Proposition \ref{prop:realanal}. Differentiating these two identities yields,
\[I = \cH w(\bfg(\bfx))\nabla\bfg(\bfx)\]
and
\[\nabla u(\bfx) = \bfg^\top(\bfx) + \bfx^\top\nabla\bfg(\bfx) - \nabla w(\bfg(\bfx))\nabla\bfg(\bfx) = \bfg^\top(\bfx).\]
Thus, $\cH u(\bfx) = \nabla\bfg(\bfx) = (\cH w)^{-1}(\bfg(\bfx))$ -- the inverse of the Hessian matrix of $w$ at $\bfg(\bfx)$. We substitute $[p,q]^\top := \bfg(\bfx)$ and compute
\begin{align*}
\Delta_\infty u(\bfx)
	&= \nabla u(\bfx)\cH u(\bfx)\nabla u^\top(\bfx)\\
	&= \bfg^\top(\bfx)(\cH w)^{-1}(\bfg(\bfx))\bfg(\bfx)\\
	&= \frac{1}{\det\cH w}[p,q]\begin{bmatrix}
	w_{qq} & -w_{pq}\\
	-w_{pq} & w_{pp}
	\end{bmatrix}\begin{bmatrix}
	p\\ q
	\end{bmatrix}\\
	&= \frac{p^2w_{qq} - 2pqw_{pq} + q^2w_{pp}}{\det\cH w} = 0
\end{align*}
by \eqref{eq:weq} and because $\det\cH w\neq 0$ at $\bfg(\bfx)$ when $\bfx\notin\delta$.
\end{proof}

It is convenient to introduce the notation
\[\bben := \frac{1}{\sqrt{2}}\begin{bmatrix}
1\\ 1
\end{bmatrix},\qquad \bben_\perp := \frac{1}{\sqrt{2}}\begin{bmatrix}
-1\\ 1
\end{bmatrix}.\]
The objective function satisfies
\begin{equation}\label{eq:fsymmetry}
f_{(x,y)}(r,\pi/2-\theta) = f_{(y,x)}(r,\theta).
\end{equation}
It is just a direct computation.
This symmetry implies that the obtained minimax must have $\theta$-coordinate $\pi/4$ when $x=y$.
The converse statement was derived in the proof of Proposition \ref{prop:realanal}. In terms of $\bfg$, the property can be summarized as
\begin{equation}
\text{$\bfg(\bfx)$ is parallel to $\bben$}\qquad\iff\qquad\bfx\in\delta.
\end{equation}

\begin{proposition}\label{prop:diagonal_value}
For $s\in(0,\sqrt{2})$ we have
\[u(s\bben) = sg(s) - W(g(s),\pi/4)\]
where $g\colon(0,\sqrt{2})\to\mR$
is the inverse of the function
\begin{align*}
r\mapsto W_r(r,\pi/4)
	&= \frac{8}{\pi}\sum_{n=1}^\infty (-1)^{n-1}\frac{m_n}{m_n^2-1}r^{m_n^2-1},\qquad m_n := 4n-2,\\
	&= \frac{8}{\pi}\left(\frac{2}{3}r^3 - \frac{6}{35}r^{35} + \frac{10}{99}r^{99} - \cdots\right).
\end{align*}
Moreover,
\[\frac{\dd}{\dd s}u(s\bben) = g(s) = \bfg^\top(s\bben)\bben = |\bfg(s\bben)|\]
and $s\mapsto	u(s\bben)$ is analytic, strictly increasing, and convex.
\end{proposition}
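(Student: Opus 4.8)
The plan is to reduce everything on the diagonal to one–variable calculus, using Proposition~\ref{prop:alternative_formula} together with the symmetry already observed after \eqref{eq:fsymmetry}. Fix $s\in(0,\sqrt{2})$ and set $\bfx=s\bben\in\Omega_1$. By Lemma~\ref{lem:interior_minmax} the minimax in \eqref{eq:solution} is attained at a unique interior point $[r,\theta]^\top\in D_1$, and \eqref{eq:fsymmetry} forces $\theta=\pi/4$. The critical relation \eqref{eq:rx_relation} then reads $W_r(r,\pi/4)=x\cos(\pi/4)+y\sin(\pi/4)=s$. Now $r\mapsto W_r(r,\pi/4)$ is continuous on $[0,1]$ by Lemma~\ref{lem:1}(II) (the two excluded corner points have $\theta\in\{0,\pi/2\}$); it vanishes at $r=0$ and equals $W_r(1,\pi/4)=-W_{\theta\theta}(1,\pi/4)=\cos(\pi/4)+\sin(\pi/4)=\sqrt{2}$ at $r=1$; and it is strictly increasing since $W_{rr}>0$ in $D_1$ by Lemma~\ref{lem:1}(I). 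Hence it is an analytic bijection of $(0,1)$ onto $(0,\sqrt{2})$. Writing $g$ for its inverse, I get $r=g(s)\in(0,1)$ and $\bfg(s\bben)=\Phi(g(s),\pi/4)=g(s)\bben$.

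From here the first identity follows at once from Proposition~\ref{prop:alternative_formula}:
\[
u(s\bben)=(s\bben)^\top\big(g(s)\bben\big)-w\big(g(s)\bben\big)=sg(s)-W(g(s),\pi/4),
\]
since $\bben^\top\bben=1$ and $w(\Phi(g(s),\pi/4))=W(g(s),\pi/4)$. Differentiating in $s$ and applying the chain rule gives
\[
\frac{\dd}{\dd s}u(s\bben)=g(s)+\big(s-W_r(g(s),\pi/4)\big)g'(s)=g(s),
\]
the bracket vanishing because $W_r(g(s),\pi/4)=s$ by the very definition of $g$. Since $\bfg(s\bben)=g(s)\bben$ with $g(s)>0$, this value is also $\bfg^\top(s\bben)\bben=g(s)\,\bben^\top\bben$ and $|\bfg(s\bben)|=g(s)|\bben|$.

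It remains to record the qualitative properties. The map $g$ is real-analytic on $(0,\sqrt{2})$, being the inverse of the analytic function $r\mapsto W_r(r,\pi/4)$ whose derivative $W_{rr}(\cdot,\pi/4)$ is nowhere zero on $(0,1)$, and $W(\cdot,\pi/4)$ is analytic on $(0,1)$ as well; hence $s\mapsto sg(s)-W(g(s),\pi/4)$ is analytic. It is strictly increasing because its derivative $g(s)$ is positive, and convex because its second derivative $g'(s)=1/W_{rr}(g(s),\pi/4)$ is positive. I do not expect a genuine obstacle here; the only point needing care is the very first one — that the minimax is interior with $\theta$-coordinate exactly $\pi/4$, so that \eqref{eq:rx_relation} may be applied and $g(s)$ lies in the open interval $(0,1)$ — and this is precisely Lemma~\ref{lem:interior_minmax} combined with the symmetry \eqref{eq:fsymmetry}.
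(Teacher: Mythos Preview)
Your proof is correct and follows essentially the same route as the paper: use the symmetry \eqref{eq:fsymmetry} to pin $\theta=\pi/4$, read off the critical relation $W_r(r,\pi/4)=s$ (the paper obtains the same identity via $\bfx=\nabla w^\top(\bfg(\bfx))$ and the polar formula for $\nabla w$ in Lemma~\ref{lem:Hw}, which is equivalent), and then do the one-variable calculus. Your explicit verification that $W_r(\cdot,\pi/4)$ is a bijection of $(0,1)$ onto $(0,\sqrt{2})$ is a small clarifying addition not spelled out in the paper.
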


\begin{proof}
Since $\bfg(s\bben)$ is parallel to $\bben$, it must be on the form $\bfg(s\bben) = g(s)\bben$ for some continuous scalar function $g$. The polar coordinates for $\bfg$ becomes $r=g(s)$, $\theta = \pi/4$, and the formula for $u$ on the diagonal then follow from Proposition \ref{prop:alternative_formula}.

As $\bfg^\top$ is the inverse of $\nabla w$, we have
\[s\bben^\top = \nabla w(\bfg(s\bben)) = \nabla w(g(s)\bben) = \nabla W(g(s),\pi/4)(\nabla\Phi)^{-1}(g(s),\pi/4),\]
which can be computed to $W_r(g(s),\pi/4)\bben^\top$. See Lemma \ref{lem:Hw}. That is, $g$ is the inverse of $W_r(r,\pi/4)$. The series is alternating because $\sin (m_n\pi/4) = (-1)^{n-1}$.
	
Note that $g$, and thus also $s\mapsto u(s\bben)$, is analytic since $W_{rr}>0$. By a direct differentiation, $\frac{\dd}{\dd s}u(s\bben) = g(s) = r > 0$ and $u$ is strictly increasing along the diagonal. Finally, $s\mapsto u(s\bben)$ is convex since $\frac{\dd^2}{\dd s^2}u(s\bben) = g'(s) = 1/W_{rr}(g(s),\pi/4) > 0$.
\end{proof}

\begin{proposition}\label{prop:uC1}
The function \eqref{eq:solution} is $C^1$ in $\Omega_1$ with gradient
\[\nabla u(\bfx) = \bfg^\top(\bfx).\]
\end{proposition}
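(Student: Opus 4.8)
The plan is to establish $C^1$ regularity by patching together the analyticity off the diagonal (Proposition \ref{prop:realanal}) with a separate continuity argument for $\nabla u$ across $\delta$. On $\Omega_1\setminus\delta$ we already know from Proposition \ref{prop:inftyharmonic_smooth} (its proof) that $\nabla u(\bfx) = \bfg^\top(\bfx)$ and that $\bfg$ is continuous on all of $\Omega_1$ by Proposition \ref{prop:alternative_formula}. So the statement $\nabla u = \bfg^\top$ holds classically away from $\delta$, and the only issue is the diagonal. The strategy is: (i) show $u$ is differentiable at each diagonal point $\bfx_0 = s\bben$ with $\nabla u(\bfx_0) = \bfg^\top(\bfx_0) = g(s)\bben^\top$; (ii) conclude $\nabla u$ is continuous everywhere since $\bfg$ is continuous and $\bfx\mapsto\bfg^\top(\bfx)$ agrees with $\nabla u$ both off and (by (i)) on $\delta$.

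For step (i) I would use the minimax formula directly. Fix $\bfx_0 = s\bben\in\delta$ with $s\in(0,\sqrt2)$, and let $[r_0,\pi/4]^\top$ be its (unique, interior) minimax from Lemma \ref{lem:interior_minmax}, so $r_0 = g(s)$ and $\bfg(\bfx_0) = r_0\bben = \Phi(r_0,\pi/4)$. The objective function $f_\bfx(\bfr) = \bfx^\top\Phi(\bfr) - W(\bfr)$ is \emph{affine} in $\bfx$ for fixed $\bfr$, with $\nabla_\bfx f_\bfx(\bfr) = \Phi(\bfr)^\top$. Hence, writing $u(\bfx) = \min_\theta h_\bfx(\theta)$ and $h_\bfx(\theta) = f_\bfx(r_\bfx(\theta),\theta)$ as in the proof of Lemma \ref{lem:interior_minmax}, one has by the envelope principle (the max over $r$ is attained at the interior critical point $r_\bfx(\theta)$, where $\partial_r f_\bfx = 0$) that $u$ is a minimum over $\theta$ of functions each of which is $C^1$ — indeed real-analytic — in $\bfx$, with $\nabla_\bfx[f_\bfx(r_\bfx(\theta),\theta)] = \Phi(r_\bfx(\theta),\theta)^\top$, the $r$-derivative term vanishing by \eqref{eq:rx_relation}. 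A minimum of a family of $C^1$ functions is differentiable at a point where the minimizing index is unique, with gradient equal to that of the selected member. Since the minimizing $\theta$ at $\bfx_0$ is the unique value $\pi/4$ (uniqueness of the minimax, Lemma \ref{lem:interior_minmax}), $u$ is differentiable at $\bfx_0$ with $\nabla u(\bfx_0) = \Phi(r_0,\pi/4)^\top = (r_0\bben)^\top = \bfg^\top(\bfx_0)$.

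The main obstacle is making the "minimum of $C^1$ functions with unique minimizer is differentiable" step rigorous here, because the index set $[0,\pi/2]$ is not finite and one must control the family uniformly. I would handle this by the standard upper/lower bound squeeze: for the upper bound, $u(\bfx)\le f_\bfx(r_0,\pi/4) = \Phi(r_0,\pi/4)^\top\bfx - W(r_0,\pi/4)$ for all $\bfx$, an affine function touching $u$ from above at $\bfx_0$ with slope $\bfg^\top(\bfx_0)$, giving the required first-order upper estimate; for the lower bound, one uses that $h_{\bfx_0}''(\pi/4) = (r_{\bfx_0}'(\pi/4))^2 W_{rr}>0$ together with analyticity of $(r_\bfx,\theta)\mapsto f_\bfx(r_\bfx(\theta),\theta)$ jointly in $(\bfx,\theta)$ near $(\bfx_0,\pi/4)$ to show the minimizing $\theta = \theta(\bfx)$ depends continuously (indeed $C^1$, by the implicit function theorem applied to $h_\bfx'(\theta)=0$, since $h_{\bfx_0}''(\pi/4)>0$) on $\bfx$, whence $u(\bfx) = f_\bfx(r_\bfx(\theta(\bfx)),\theta(\bfx))$ is a composition of analytic maps and is $C^1$ at $\bfx_0$ with the stated gradient. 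Either route closes step (i). Finally, for step (ii), continuity of $\nabla u$ on $\Omega_1$: off $\delta$, $\nabla u = \bfg^\top$ is continuous by Proposition \ref{prop:alternative_formula}; at a diagonal point $\bfx_0$, and for any $\bfx_j\to\bfx_0$, continuity of $\bfg$ gives $\nabla u(\bfx_j) = \bfg^\top(\bfx_j)\to\bfg^\top(\bfx_0) = \nabla u(\bfx_0)$ (using (i) for those $\bfx_j$ that happen to lie on $\delta$). Hence $u\in C^1(\Omega_1)$ with $\nabla u = \bfg^\top$ throughout.
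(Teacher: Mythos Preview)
Your overall architecture is right: $\nabla u=\bfg^\top$ off the diagonal by Proposition~\ref{prop:inftyharmonic_smooth}, $\bfg$ is continuous on $\Omega_1$, and the only issue is differentiability of $u$ at points $\bfx_0=s_0\bben\in\delta$. But both of your ``either route'' arguments for that step break down precisely on the diagonal.

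First, the affine upper bound you write, $u(\bfx)\le f_\bfx(r_0,\pi/4)$, does not follow from the minimax. Fixing $\theta=\pi/4$ only gives $u(\bfx)\le \max_r f_\bfx(r,\pi/4)$, and $\max_r f_\bfx(r,\pi/4)\ge f_\bfx(r_0,\pi/4)$; the inequalities go the wrong way. (One can use $\bfx\mapsto\max_r f_\bfx(r,\pi/4)$ as a smooth upper barrier with the correct gradient, but then you still need a matching lower barrier.) Second, and more fundamentally, the implicit-function-theorem route fails because $h_{\bfx_0}''(\pi/4)=0$, not $>0$. Indeed, from $W_r(r_{\bfx_0}(\theta),\theta)=x_0\cos\theta+y_0\sin\theta$ one gets
\[
r_{\bfx_0}'(\theta)=\frac{-x_0\sin\theta+y_0\cos\theta-W_{r\theta}(r_{\bfx_0}(\theta),\theta)}{W_{rr}(r_{\bfx_0}(\theta),\theta)},
\]
and at $\theta=\pi/4$ with $x_0=y_0$ the numerator is $0-W_{r\theta}(r_0,\pi/4)=0$ (since $W_\theta(r,\pi/4)\equiv 0$ forces $W_{r\theta}(r,\pi/4)=0$). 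Hence $r_{\bfx_0}'(\pi/4)=0$ and $h_{\bfx_0}''(\pi/4)=(r_{\bfx_0}'(\pi/4))^2W_{rr}=0$. This degeneracy is exactly the vanishing of $\det\cH w$ at $\theta=\pi/4$ in \eqref{eq:Hw_det}, and it is the reason the diagonal requires a separate argument in the first place; if your IFT step worked, $u$ would be real-analytic across $\delta$, contradicting the subsequent proposition.

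The paper bypasses this degeneracy with a direct increment estimate: write $\bfh=h_1\bben+h_2\bben_\perp$, use Proposition~\ref{prop:diagonal_value} for the tangential part $u((s_0+h_1)\bben)-u(s_0\bben)=g(s_0)h_1+o(|h_1|)$, and for the normal part apply the one-variable mean value theorem on the segment from $(s_0+h_1)\bben$ to $(s_0+h_1)\bben+h_2\bben_\perp$ (smooth in its interior) to get $\bfg^\top(\cdot)\,h_2\bben_\perp$ at an intermediate point, which is $o(|\bfh|)$ because $\bfg$ is continuous and $\bfg(\bfx)\perp\bben_\perp$ on $\delta$. That argument uses only continuity of $\bfg$ and the already-established diagonal derivative, and does not require any nondegeneracy of the saddle.
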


By Proposition \ref{prop:alternative_formula}, the function $\bfg$ is continuous in $\Omega_1$ and $u$ is given by $u(\bfx) = \bfx^\top\bfg(\bfx) - w(\bfg(\bfx))$.
However, (as we shall see) \emph{$\bfg$ is not differentiable over the diagonal} and we cannot differentiate the formula for $u$ directly to obtain $\nabla u = \bfg^\top$, as we did in Proposition \ref{prop:inftyharmonic_smooth}.

\begin{proof}
I aim to prove $u(\bfx_0 + \bfh) - u(\bfx_0) = \bfg^\top(\bfx_0)\bfh + o(|\bfh|)$ as $\bfh\to 0$, also for points $\bfx_0$ on the diagonal.

Write $\bfx_0 = s_0\bben\in\delta\cap\Omega_1$ and decompose $\bfh$ into $h_1\bben + h_2\bben_\perp$. Then
\begin{align*}
u(\bfx_0 + \bfh) - u(\bfx_0)
	&= u(s_0\bben + h_1\bben + h_2\bben_\perp) - u(s_0\bben)\\
	&= u(s_0\bben + h_1\bben + h_2\bben_\perp) - u(s_0\bben + h_1\bben)\\
	&\quad{}+ u(s_0\bben + h_1\bben) - u(s_0\bben).
\end{align*}
Since $u$ is continuous on the closed interval from $s_0\bben + h_1\bben$ to $s_0\bben + h_1\bben + h_2\bben_\perp$ and smooth in the interior, there is a $\gamma\in(0,1)$ so that the first difference above is
\[u((s_0+h_1)\bben + h_2\bben_\perp) - u((s_0+h_1)\bben) = \bfg^\top((s_0+h_1)\bben + \gamma h_2\bben_\perp)h_2\bben_\perp.\]
The expression is $o(|\bfh|)$ since $\bfg$ is continuous and perpendicular to $\bben_\perp$ on the diagonal.

By Proposition \ref{prop:diagonal_value}, the second difference is precisely
\begin{align*}
u((s_0+h_1)\bben) - u(s_0\bben)
	&= g(s_0)h_1 + o(|h_1|)\\
	&= \bfg^\top(s_0\bben)\bben h_1 + o(|h_1|)\\
	&= \bfg^\top(\bfx_0)\bfh + o(|\bfh|),
\end{align*}
which completes the proof.
\end{proof}

In the final Proposition I show that $u$ is not twice differentiable on the diagonal. In particular,
\begin{proposition}
Let $\bfx_0\in\delta\cap\Omega_1$. For $|s|$ small, define $c(s) := u(\bfx_0 + s\bben_\perp)$. Then
\[\lim_{s\to 0}c''(s) = -\infty.\]
\end{proposition}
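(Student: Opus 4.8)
The plan is to express $c''(s)$ in terms of the Hessian $\cH u$ evaluated at $\bfx_0 + s\bben_\perp$, and then to exploit the explicit formula \eqref{eq:Hw_det} for $\det\cH w$ together with the fact that $\cH u = (\cH w)^{-1}$ away from the diagonal, to see that $c''(s)$ blows up as the perturbation direction $\bben_\perp$ decouples from the gradient direction. Concretely, for $s\neq 0$ the point $\bfx_0 + s\bben_\perp$ lies off the diagonal, so by Propositions \ref{prop:realanal} and \ref{prop:inftyharmonic_smooth} the function $u$ is real-analytic there and $\cH u = (\cH w)^{-1}$ at $\bfp = \bfg(\bfx_0 + s\bben_\perp)$. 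Since $c(s) = u(\bfx_0 + s\bben_\perp)$, the chain rule gives
\[
c''(s) = \bben_\perp^\top \cH u(\bfx_0 + s\bben_\perp)\bben_\perp = \bben_\perp^\top (\cH w)^{-1}(\bfp)\,\bben_\perp.
\]
Using $(\cH w)^{-1} = (\det\cH w)^{-1}\begin{bmatrix} w_{qq} & -w_{pq}\\ -w_{pq} & w_{pp}\end{bmatrix}$ and the explicit entries from \eqref{eq:sec5_fullHw}, the quadratic form $\bben_\perp^\top (\cH w)^{-1}\bben_\perp$ can be written out; by \eqref{eq:Hw_det} the prefactor is $1/\det\cH w = -r^2/(W_{r\theta} - \tfrac1r W_\theta)^2$.

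Next I would track the behaviour as $s\to 0$. As $s\to 0$ we have $\bfx_0 + s\bben_\perp \to \bfx_0\in\delta$, so by continuity of $\bfg$ (Proposition \ref{prop:alternative_formula}) the polar coordinates $(r,\theta)$ of $\bfp = \bfg(\bfx_0 + s\bben_\perp)$ converge to $(r_0,\pi/4)$ where $r_0 = g(s_0) > 0$ is the value from Proposition \ref{prop:diagonal_value}. The crucial point is that $W_{r\theta}(r,\theta) - \tfrac1r W_\theta(r,\theta) = \tfrac1r U_\theta(r,\theta)$, and by Lemma \ref{lem:1} (III) $U_\theta(r_0,\pi/4) = 0$ while $W_{rr}(r_0,\pi/4) > 0$. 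So the denominator $(W_{r\theta} - \tfrac1r W_\theta)^2$ tends to $0$, the numerator of the quadratic form tends to a strictly positive multiple of $W_{rr}(r_0,\pi/4)$ (one should check that the $W_{rr}$-terms in the numerator do not cancel for the direction $\bben_\perp$ — they survive precisely because $\bben_\perp$ is the direction along which $\bfg$ varies), and the overall sign is negative by \eqref{eq:Hw_det}. Hence $c''(s)\to -\infty$.

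The main obstacle is the bookkeeping at the blow-up: I must confirm (i) that the numerator $\bben_\perp^\top\begin{bmatrix} w_{qq} & -w_{pq}\\ -w_{pq} & w_{pp}\end{bmatrix}\bben_\perp$ does not itself vanish in the limit — this amounts to checking that, after rotating coordinates so $\bben, \bben_\perp$ are the axes, the relevant entry of $\cH w$ picks up the $r^2 W_{rr}$ contribution rather than only the vanishing $U_\theta$ contribution, which is exactly the statement that $\cH w$ restricted to the $\bben_\perp$-direction behaves like $W_{rr}>0$; and (ii) that $U_\theta$ vanishes only to finite order in $\theta$ at $\theta = \pi/4$, so that the rate of approach $\theta(s)\to\pi/4$ combined with this order genuinely forces divergence rather than a finite limit — but since we only need $c''(s)\to-\infty$ and not a precise rate, it suffices that $U_\theta(r,\theta(s))\to 0$ while the numerator stays bounded away from $0$, which follows directly from Lemma \ref{lem:1} and the continuity of $\bfg$. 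A clean way to organize (i) is to note that in the $\bben,\bben_\perp$ frame the matrix in the middle of Lemma \ref{lem:Hw}, conjugated by the rotation, still has the form $\begin{bmatrix} W_{rr} & *\\ * & 0\end{bmatrix}$ up to the $(\nabla\Phi)^{-1}$ factors at $\theta=\pi/4$, and one reads off that $(\cH w)^{-1}$ has a $(\bben_\perp,\bben_\perp)$-entry equal to $-r^2 W_{rr}/U_\theta^2 \cdot(\text{const})$, which $\to-\infty$.
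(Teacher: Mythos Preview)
Your proposal is correct and follows essentially the same route as the paper: write $c''(s)=\bben_\perp^\top(\cH w)^{-1}(\bfg(\bfx_0+s\bben_\perp))\bben_\perp$, observe that the prefactor $1/\det\cH w=-r^2/(W_{r\theta}-\tfrac1r W_\theta)^2$ blows up to $-\infty$ as $\theta\to\pi/4$ by Lemma~\ref{lem:1}~(III), and check that the remaining factor stays bounded away from zero by Lemma~\ref{lem:1}~(I). The paper organizes the linear algebra slightly differently---it inverts the middle matrix in Lemma~\ref{lem:Hw} directly and computes $\nabla\Phi^\top\bben_\perp$ explicitly, arriving at the closed form
\[
c''(s)=-\frac{2r(\sin^2\theta-\cos^2\theta)\bigl(W_{r\theta}-\tfrac1r W_\theta\bigr)+r^2(\cos\theta+\sin\theta)^2 W_{rr}}{2\bigl(W_{r\theta}-\tfrac1r W_\theta\bigr)^2},
\]
from which the limit is read off immediately. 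Your adjugate computation amounts to the same thing: $\bben_\perp^\top\begin{bmatrix}w_{qq}&-w_{pq}\\-w_{pq}&w_{pp}\end{bmatrix}\bben_\perp=\tfrac12(w_{pp}+2w_{pq}+w_{qq})$, which by \eqref{eq:sec5_fullHw} equals $\tfrac12\bigl((\cos\theta+\sin\theta)^2W_{rr}+2r^{-2}(\cos^2\theta-\sin^2\theta)U_\theta\bigr)\to W_{rr}(r_0,\pi/4)>0$, so your ``obstacle~(i)'' is resolved by a one-line substitution rather than the heuristic you sketch at the end.
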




\begin{proof}
For $s\neq 0$, $c$ is smooth with second derivative $c''(s) = \bben_\perp^\top\cH u(\bfx_0 + s\bben_\perp)\bben_\perp$
By the formula for $\cH w(\bfg(\bfx)) = (\cH u)^{-1}(\bfx)$ from Lemma \ref{lem:Hw} we get
\[\cH u(\bfx)
= -\frac{1}{\left(W_{r\theta} - \frac{1}{r}W_\theta\right)^2}\nabla\Phi\begin{bmatrix}
0 & \frac{1}{r}W_\theta - W_{r\theta}\\
\frac{1}{r}W_\theta - W_{r\theta} & W_{rr}
\end{bmatrix}\nabla\Phi^\top
\]
at $\bfr = \Phi^{-1}(\bfg(\bfx))$.
Next,
\[\nabla\Phi^\top\bben_\perp = \frac{1}{\sqrt{2}}\begin{bmatrix}
\sin\theta - \cos\theta\\ r(\cos\theta + \sin\theta)
\end{bmatrix},\]
so
\begin{align*}
c''(s) &= \bben_\perp^\top\cH u(\bfx_0 + s\bben_\perp)\bben_\perp\\
	&= -\frac{1}{\left(W_{r\theta} - \frac{1}{r}W_\theta\right)^2}\bben_\perp^\top\nabla\Phi\begin{bmatrix}
	0 & \frac{1}{r}W_\theta - W_{r\theta}\\
	\frac{1}{r}W_\theta - W_{r\theta} & W_{rr}
	\end{bmatrix}\nabla\Phi^\top\bben_\perp\\
	&= -\frac{2r(\sin^2\theta-\cos^2\theta)\left(W_{r\theta} - \frac{1}{r}W_\theta\right) + r^2(\cos\theta + \sin\theta)^2 W_{rr}}{2\left(W_{r\theta} - \frac{1}{r}W_\theta\right)^2}
\end{align*}
at $\bfr = \Phi^{-1}(\bfg(\bfx_0 + s\bben_\perp))$.
This concludes the proof since $\theta\to\pi/4$ as $s\to 0$. The numerator then goes to $0 + 2r_0^2W_{rr}(r_0,\pi/4) > 0$ (Lemma \ref{lem:1} (I)) while the denominator goes to 0 from the positive side (Lemma \ref{lem:1} (III)).
\end{proof}

By this last result it is clear that no $C^2$ test function can touch $u$ from below on the diagonal. In order to complete the proof of $u$ being a viscosity solution to the Dirichlet problem \eqref{eq:Dirproblem2}, it only remains to consider test functions touching $u$ on the diagonal from above.

If a $C^2$ test function $\phi$ touches $u$ from above at $\bfx_0 = s_0\bben\in\delta\cap\Omega_1$, then $\nabla\phi(\bfx_0) = \nabla u(\bfx_0) = g(s_0)\bben^\top$ since $u$ is $C^1$ and
\begin{align*}
\Delta_\infty\phi(\bfx_0)
	&= g^2(s_0)\lim_{\epsilon\to 0}\frac{\phi(\bfx_0 - \epsilon\bben) - 2\phi(\bfx_0) + \phi(\bfx_0 + \epsilon\bben)}{\epsilon^2}\\
	&\geq g^2(s_0)\lim_{\epsilon\to 0}\frac{u(\bfx_0 - \epsilon\bben) - 2u(\bfx_0) + u(\bfx_0 + \epsilon\bben)}{\epsilon^2}\\
	&= g^2(s_0)g'(s_0) > 0.
\end{align*}
Thus it is proved that $u$ is a viscosity solution of the Dirichlet problem \eqref{eq:Dirproblem2} in $\Omega_1$.

We now glue the translations and reflections of the formula \eqref{eq:solution} in $\oO_1$ (call it $u_1\colon\oO_1\to\mR$) together to make the solution $u\colon\oO\to\mR$ of the problem \eqref{eq:Dirprob1}. More specifically,
\begin{equation}\label{eq:full_solution}
u(x,y) = \begin{cases}
u_1(x,y),\qquad &\text{for $0\leq x\leq 1,\;0\leq y\leq 1$},\\
u_1(2-x,y), &\text{for $1\leq x\leq 2,\;0\leq y\leq 1$},\\
u_1(2-x,2-y), &\text{for $1\leq x\leq 2,\;1\leq y\leq 2$},\\
u_1(x,2-y), &\text{for $0\leq x\leq 1,\;1\leq y\leq 2$}.
\end{cases}
\end{equation}
The bounds
\[1 - \sqrt{(1-x)^2 + (1-y)^2} \leq u(x,y) \leq \dist( (x,y),\partial\Omega)\]
then holds for all $[x,y]^\top\in\oO$. In particular, $u\in C^1(\Omega\setminus\{[1,1]^\top\})$ since it is squeezed between smooth functions along the gluing edges, i.e., the medians in the square $\Omega$. Furthermore, either of the bounds are locally $\infty$-harmonic at the medians and any test function $\phi$ touching $u$ from above or below at these lines will have the correct sign of $\Delta_\infty\phi$ at the touching point.

The proof of Theorem \ref{thm} is completed.

\section{The first approximation recovers Aronsson's function}

If the series $W$ is truncated, one should expect to get an approximation of the solution $u$. The series converge very fast for small $r\geq0$, and $W$ is dominated by its first term
\[W(r,\theta) \approx \frac{4}{3\pi}r^4\sin(2\theta).\]
In Cartesian coordinates $[p,q] = [r\cos\theta,r\sin\theta]$ this is
\[w(p,q) \approx \frac{4}{3\pi}r^4\sin(2\theta) = \frac{8}{3\pi}r^2r^2\sin\theta \cos\theta = \frac{8}{3\pi}(p^2+q^2)pq\]
with gradient
\[\nabla w(p,q) \approx \frac{8}{3\pi}[3p^2q + q^3, p^3 + 3pq^2].\]
We want to find the inverse of this function. That is, to solve the system $\nabla w(p,q) = [x,y]$ for $p$ and $q$. Adding and subtracting yields
\begin{align*}
\frac{3\pi}{8}(y + x) &= p^3 + 3p^2q + 3pq^2 + q^3 = (p+q)^3,\\
\frac{3\pi}{8}(y - x) &= p^3 - 3p^2q + 3pq^2 - q^3 = (p-q)^3,
\end{align*}
so 
\begin{align*}
p &= \frac{1}{2}\left(c(x+y)^{1/3} + c(y-x)^{1/3}\right),\\
q &= \frac{1}{2}\left(c(x+y)^{1/3} - c(y-x)^{1/3}\right),
\end{align*}
where $c := (3\pi)^{1/3}/2$. This defines the function $\bfg(x,y) = [p,q]^\top$, and
\[\nabla u(x,y) = \bfg^\top(x,y) \approx \frac{c}{2}\left[(x+y)^{1/3} + (y-x)^{1/3},\, (x+y)^{1/3} - (y-x)^{1/3}\right],\]
which we recognise as the gradient of
\begin{equation}\label{eq:first_approx}
u(x,y) \approx \frac{3c}{8}\left((x+y)^{4/3} - (y-x)^{4/3}\right),\qquad 0\leq x\leq 1,\,0\leq y\leq 1.
\end{equation}
It is a rotation of Aronsson's function. It will be a very good approximation to the solution $u$ when $r$ -- that is, the length of the gradient of $u$ -- is small. Near the boundary point $(1,1)$ the length of $\nabla u$ tends to its maximal value 1. Even so, the formula \eqref{eq:first_approx} yields the acceptable estimate
\[1 = u(1,1) \approx \frac{3}{8}(6\pi)^{1/3} = 0.99800...\]

\section{Connections to the Jacobi Theta function}\label{sec:diagonal}

The \nth{2} Jacobi Theta function is
\[\vartheta_2(z,q) = 2\sum_{k=1}^\infty q^{(k-1/2)^2}\cos((2k-1)z),\quad 0\leq q<1.\]
Differentiating $U := rW_r - W$ given in \eqref{eq:Udef} with respect to $\theta$ yields
\begin{align*}
U_\theta(r,\theta)
	&= rW_{r\theta}(r,\theta) - W_\theta(r,\theta)\\
	&= \frac{8}{\pi}\sum_{n=1}^\infty r^{(4n-2)^2}\cos((4n-2)\theta)\\
	&= \frac{4}{\pi}\vartheta_2(2\theta,r^{16}).
\end{align*}
The expression for the determinant of $\cH u = (\cH w)^{-1}$ in the Introduction then follows from \eqref{eq:Hw_det}.
Next, $u$ expressed as a function of $\nabla u = [r\cos\theta, r\sin\theta] = \Phi^\top(\bfr)$ is precisely 
\begin{align*}
u(\nabla w(\Phi(\bfr)))
	&=\nabla w(\Phi(\bfr))\Phi(\bfr) - w(\Phi(\bfr)),\qquad\text{by \eqref{eq:alt_formula},}\\
	&= rW_r(r,\theta) - W(r,\theta)\\
	&= U(r,\theta).
\end{align*}
Thus
\[(t,\theta)\mapsto u\left((\nabla u)^{-1}(e^{-t}\cos\theta,e^{-t}\sin\theta)\right) = U(e^{-t},\theta)\]
solves the heat equation and
\[u = \frac{4}{\pi}\int_0^\theta \vartheta_2(2\psi,r^{16})\dd\psi.\]

On the diagonal, $u(s\bben) = g(s)s - W(g(s),\pi/4)$ where $g$ is the inverse of
\[r\mapsto W_r(r,\pi/4) = \frac{8}{\pi}\left(\frac{2}{3}r^3 - \frac{6}{35}r^{35} + \frac{10}{99}r^{99} - \cdots\right).\]
This was proved in Proposition \ref{prop:diagonal_value}.
In terms of $r = g(s) = |\nabla u(s\bben)|$ the formula $u = rW_r(r,\pi/4) - W(r,\pi/4)$
is obtained by writing $s = W_r(g(s),\pi/4)$.

The Theta function has the product representation
\[\vartheta_2(z,q) = 2q^{1/4}\cos z\prod_{k=1}^{\infty}(1-q^{2k})(1 + 2q^{2k}\cos(2z) + q^{4k}),\quad 0\leq q<1.\]
See \cite{MR0178117}, page 464 and 470. The $z$-derivative is
\begin{align*}
\frac{\partial}{\partial z}\vartheta_2(z,q)
&= -\sin z\left(1 + \cos^2 z \sum_{j=1}^\infty \frac{8q^{2j}}{1 + 2q^{2j}\cos(2z) + q^{4j}}\right)\\
&\quad{} \times 2q^{1/4}\prod_{k=1}^{\infty}(1-q^{2k})(1 + 2q^{2k}\cos(2z) + q^{4k}).
\end{align*}
At $\theta = \pi/4$ we have $z = 2\theta = \pi/2$. Thus, $\cos z = 0$, $\cos(2z) = -1$, $\sin z = 1$, and it follows that
\begin{align*}
U_r(r,\pi/4)
	&= -\frac{1}{r}U_{\theta\theta}(r,\pi/4)\\
	&= -\frac{4}{\pi r}\frac{\partial}{\partial\theta}\vartheta_2(2\theta,r^{16})\Big|_{\theta = \pi/4}\\
	&= \frac{8}{\pi}r^3 \prod_{k=1}^{\infty}(1-r^{32k})^3\qquad \text{for $0\leq r < 1$,}
\end{align*}
and thus
\begin{equation}\label{eq:Ur_lim}
\lim_{r\nearrow 1}U_r(r,\pi/4) = 0.
\end{equation}
This fact is not so easily derived from the series representation
\[U_r(r,\pi/4) = rW_{rr}(r,\pi/4) = \frac{8}{\pi}\left(2r^3 - 6r^{35} + 10r^{99} - \cdots\right).\]

\section{The $\infty$-Potential is not the $\infty$-Ground State}

The viscosity solution of the problem
\begin{equation}\label{eq:eigfuneq}
\begin{cases}
\max\left\{\Lambda - \frac{|\nabla v|}{v},\; \Delta_\infty v\right\} = 0, &\text{in $\Omega$,}\\
v>0,\quad v\in C(\oO),\quad v|_{\partial\Omega} = 0
\end{cases}
\end{equation}
is the $\infty$-Ground State. According to \cite{MR1716563} the only possible value of $\Lambda$ is
\[\Lambda = \frac{1}{\max_\bfx \dist(\bfx,\partial\Omega)},\]
which becomes $\Lambda = 1$ for the square $\Omega$.
In \cite{MR1886623} it was predicted that $v=u$.
In \cite{https://doi.org/10.48550/arxiv.2004.08127} the bound $\max_{\Omega}|v-u|\lesssim 10^{-3}$ was numerically obtained, supporting the conjecture. However, my explicit formula for $u$ on the diagonal shows that $u$ \emph{is not} $v$.

In order to show that \eqref{eq:solution} is not a solution of \eqref{eq:eigfuneq}, consider the difference
\[d(r) := u - |\nabla u| = U(r,\pi/4) - r\]
between $u$ and $|\nabla u|$ on the diagonal.
The difference is continuous up to the boundary with end-point values $d(0) = d(1) = 0$.
By \eqref{eq:Ur_lim}, $\lim_{r\nearrow 1}U_r(r,\pi/4) = 0$. See Figure \ref{fig:diagonal}\textsc{(a)}. Thus, $\lim_{r\nearrow 1}d'(r) = -1$ and $d$ must be positive for some $r_0<1$. That is, $|\nabla u|$ is less than $u$ at $s_0\bben\in\Omega_1$ where $s_0 = W_r(r_0,\pi/4)$, as shown in Figure \ref{fig:diagonal}\textsc{(b)}.
It follows that
\[1 - \frac{|\nabla u(s_0\bben)|}{u(s_0\bben)} > 0\]
and hence $u$ cannot be an $\infty$-eigenfunction.

\begin{figure}[h]
	\centering
	\begin{subfigure}[b]{0.45\textwidth}
		\centering
		\includegraphics{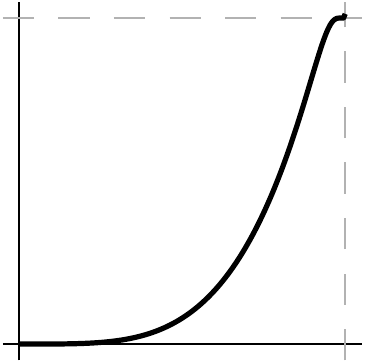}
		\caption{}
		\label{fig:u_and_r}
	\end{subfigure}
	\begin{subfigure}[b]{0.45\textwidth}
		\centering
		\includegraphics{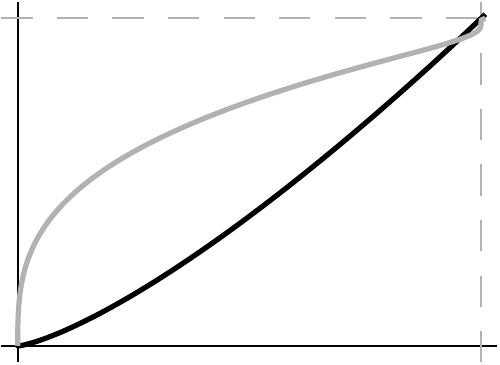}
		\caption{}
		\label{fig:bad}
	\end{subfigure}
	\caption{The diagonal: \textsc{(a)} $u$ as a function of
		$r=|\nabla u|\in[0,1]$. \textsc{(b)} $u(s\bben)$ and $|\nabla u(s\bben)|$ (gray) in arc-length parametrization $s\in[0,\sqrt{2}]$.}\label{fig:diagonal}
\end{figure}

%

\section{Deriving the solution}\label{sec:derive}

In this Section I shall give an heuristic explanation on how the formula \eqref{eq:solution} was obtained.
No rigour is needed and I put the emphasise on the method and the ideas.

I shall take advantage of some known properties of the solution to \eqref{eq:Dirprob1} in the square $\oO = [0,2]^2$. Firstly, by repeatedly making odd reflections along the edges, one can tessellate the plane by a function that is infinity-harmonic in $\mR^2$ except at coordinates $(m,n)$ where $m$ and $n$ are odd. Secondly, the bounds
\[1 - \sqrt{(1-x)^2 + (1-y)^2} \leq u(x,y) \leq \dist( (x,y),\partial\Omega),\]
which follow by the comparison principle, determines both $u$ and $\nabla u$ on the medians of $\oO$, and -- by extension -- on the grid lines in $\mR^2$ connecting the singularities $\{(m,n)\}$.  In particular, the gradient is known on the boundary of the square 
\[S := \{[x,y]^\top\;|\; -1<x<1,\,-1<y<1\},\]
which is a prerequisite in order to use the hodograph transform. 

I therefore consider the Dirichlet problem
\begin{equation}\label{eq:Dirprob_derive}
\begin{cases}
\Delta_\infty u = 0\qquad &\text{in $S$},\\
u(t,1) = u(1,t) = t,\\
u(t,-1) = u(-1,t) = -t, &\text{for $-1\leq t\leq 1$.}
\end{cases}
\end{equation}
The restriction $u|_{\oO_1}$ will then be the solution of \eqref{eq:Dirproblem2}.
This can also be seen by noting that the symmetry of the boundary conditions in \eqref{eq:Dirprob_derive} implies $u=0$ on the coordinate axis.

\begin{figure}[h]%
	\center
	\includegraphics{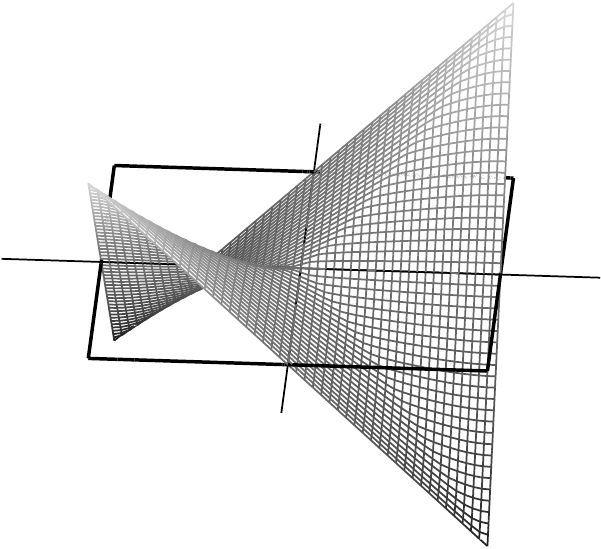}%
	\caption{The graph of the solution to \eqref{eq:Dirprob_derive} over the square $\overline{S}=[-1,1]^2$.}%
	\label{fig:derive_solution}%
\end{figure}

The following ansatz is made: The gradient $\nabla u\colon S\to B$ to the solution of \eqref{eq:Dirprob_derive} is one-to-one and onto
$B := \left\{(p,q)\in\mR^2\,|\, p^2 + q^2<1\right\}$ -- the unit disk.
Denote by $\bff\colon B\to S$ the inverse of $\nabla u$ and define $w\colon B\to\mR$ as
\[w(\bfp) := \bfp^\top\bff(\bfp) - u(\bff(\bfp)).\]
If $\bff$ is differentiable, then
\[\bfp = \nabla u^\top(\bff(\bfp))\qquad\text{implies}\qquad I = \cH u(\bff(\bfp))\nabla f(\bfp)\]
and
\begin{align*}
\nabla w(\bfp) &= \bff^\top(\bfp) + \bfp^\top\nabla\bff(\bfp) - \nabla u(\bff(\bfp))\nabla\bff(\bfp) = \bff^\top(\bfp),\\
\cH w(\bfp) &= \nabla\bff(\bfp) = (\cH u)^{-1}(\bff(\bfp)).
\end{align*}
Thus,
\begin{align*}
0 &= \nabla u(\bff(\bfp))\cH u(\bff(\bfp))\nabla u^\top(\bff(\bfp))\\
  &= \bfp^\top (\cH w)^{-1}(\bfp)\bfp\\
  &= [p,q]\begin{bmatrix}
	w_{pp} & w_{pq}\\
	w_{pq} & w_{qq}
  \end{bmatrix}^{-1}\begin{bmatrix}
  p\\ q
  \end{bmatrix}\\
  &= \frac{1}{w_{pp}w_{qq} - w_{pq}^2}[p,q]\begin{bmatrix}
  w_{qq} & -w_{pq}\\
  -w_{pq} & w_{pp}
  \end{bmatrix}\begin{bmatrix}
  p\\ q
  \end{bmatrix},
\end{align*}
which leads to the linear homogeneous equation
\begin{equation}\label{eq:derive_w}
0 = p^2w_{qq} - 2pqw_{pq} + q^2w_{pp}.
\end{equation}
The equation is degenerate elliptic, as can be seen from writing the above as
\[0 = [-q,p]\begin{bmatrix}
w_{pp} & w_{pq}\\
w_{pq} & w_{qq}
\end{bmatrix}\begin{bmatrix}
-q\\ p
\end{bmatrix} = \tr\left(A(\bfp)\cH w\right)\]
where $A\colon B\to\mathcal{S}^2_+$ is the one-rank matrix valued function
\begin{equation}\label{eq:def_A}
A(\bfp) := Q\bfp\bfp^\top Q^\top,\qquad Q := \begin{bmatrix}
0 & -1\\
1 & 0
\end{bmatrix}.
\end{equation}
Since the domain of $w$ is the unit disk, it is natural to introduce polar coordinates. Define
\[W(r,\theta) := w(r\cos\theta,r\sin\theta).\]
Then $W_\theta = - w_p r\sin\theta + w_q r\cos\theta$ and
\begin{align*}
W_{\theta\theta} &= - \left[-w_{pp} r\sin\theta + w_{pq} r\cos\theta\right]r\sin\theta\\
	&\quad{} -w_p r\cos\theta\\
	&\quad{} + \left[-w_{pq} r\sin\theta + w_{qq} r\cos\theta\right]r\cos\theta\\
	&\quad{} -w_q r\sin\theta\\
	&= q^2w_{pp} - 2pqw_{pq} + p^2w_{qq} - pw_p - qw_q.
\end{align*}
Since $rW_r = r(w_p\cos\theta + w_q\sin\theta) = pw_p + qw_q$ it follows that
\begin{equation}\label{eq:Weq}
rW_r + W_{\theta\theta} = 0.
\end{equation}

A separation of variables $W(r,\theta) = F(r)G(\theta)$ yields
\[\frac{rF'(r)}{F(r)} = n^2 = -\frac{G''(\theta)}{G(\theta)},\qquad n=0,1,2,\dots,\]
and suggests a solution on the form
\begin{equation}\label{eq:Wgeneral_form}
W(r,\theta) = \sum_{n=0}^\infty r^{n^2}\left(A_n\cos(n\theta) + B_n\sin(n\theta)\right).
\end{equation}

The boundary values of $w$ at $\partial B$ needs to be established. That is, the values $W(1,\theta)$ for $\theta\in[0,2\pi]$. Since $\nabla u = [1,0]$ at the upper boundary $\ell_u := \{[t,1]^\top\;|\; -1<t<1\}\subseteq \partial S$, the gradient $\nabla u$ is certainly not invertible in the closure of $S$. Nevertheless, allowing $\bff$ to be multivalued at $[1,0]^\top$ as $\bff(1,0) := \ell_u$, still produces -- with some goodwill -- the single value
\begin{align*}
w(1,0) &= [1,0]\bff(1,0) - u(\bff(1,0))\\
          &= [1,0]\begin{bmatrix}
			t\\ 1	
          \end{bmatrix} - u(t,1)\\
          &= t - t = 0
\end{align*}
of $w$ at $[1,0]^\top$. The same calculations apply for the remaining three edges, but this strategy settles the values $W(1,k\pi/2) = 0$, $k = 0,1,2,3$, only at four boundary points.

We must examine the behaviour of $\nabla u$ at the corners of $S$. 
When $\bfx\in S$ approaches the corner point $[1,1]^\top$ we know from Corollary 10 in \cite{1078-0947_2019_8_4731} that $|\nabla u(\bfx)|\to 1$. Also, the gradient is continuous, and since $\nabla u(\ell_u) = [1,0]$ on the upper boundary and $\nabla u(\ell_r) = [0,1]$ on the right boundary $\ell_r := \{[1,t]^\top\;|\; -1<t<1\}$, the gradient must take on all the 'intermediate' values $\nabla u = [\cos\theta,\sin\theta]$, $0\leq\theta\leq\pi/2$, in some limit $\bfx\to [1,1]^\top$. We therefore consider $\nabla u$ as multivalued at $[1,1]^\top$. The 'inverse' $\bff$ is then constant $\bff(\cos\theta,\sin\theta) = [1,1]^\top$ for $0\leq\theta\leq\pi/2$, and the boundary values for $W$ becomes
\begin{align*}
W(1,\theta) &= w(\cos\theta,\sin\theta)\\
	&= [\cos\theta,\sin\theta]\bff(\cos\theta,\sin\theta) - u(\bff(\cos\theta,\sin\theta))\\
	&= [\cos\theta,\sin\theta]\begin{bmatrix}
	1\\ 1
	\end{bmatrix} - u(1,1)\\
	&= \cos\theta + \sin\theta - 1,\qquad \theta\in[0,\pi/2].
\end{align*}

I now continue around the square $S$ in a clock-wise manner in order to derive the boundary values also for $\theta\in[\pi/2,2\pi]$.
At the lower edge $\ell_l := \{[t,-1]^\top\;|\; -1<t<1\}$ we have $\nabla u = [-1,0]$, so near the corner $[1,-1]^\top$ the gradient sweeps $[\cos\theta,\sin\theta]$, $\pi/2\leq\theta\leq\pi$. We define $\bff(\cos\theta,\sin\theta) := [1,-1]^\top$ for those values and get
\begin{align*}
W(1,\theta) &= w(\cos\theta,\sin\theta)\\
&= [\cos\theta,\sin\theta]\bff(\cos\theta,\sin\theta) - u(\bff(\cos\theta,\sin\theta))\\
&= [\cos\theta,\sin\theta]\begin{bmatrix}
1\\ -1
\end{bmatrix} - u(1,-1)\\
&= \cos\theta - \sin\theta + 1,\qquad \theta\in[\pi/2,\pi].
\end{align*}

Conducting the same analysis at the corner points $[-1,-1]^\top$ and $[-1,1]^\top$ will yield the values of $W$ all around the circle. Namely,
\begin{equation}
W(1,\theta) = \begin{cases}
\cos\theta + \sin\theta - 1,\qquad &\text{for $0\leq\theta\leq\pi/2$,}\\
\cos\theta - \sin\theta + 1,\qquad &\text{for $\pi/2\leq\theta\leq\pi$,}\\
-\cos\theta - \sin\theta - 1,\qquad &\text{for $\pi\leq\theta\leq 3\pi/2$,}\\
-\cos\theta + \sin\theta + 1,\qquad &\text{for $3\pi/2\leq\theta\leq 2\pi$.}
\end{cases}
\end{equation}

Note that $W(1,k\pi/2) = 0$ for integers $k$, as it should. The function is thus well defined and continuous. In fact, its derivative is
\[W_\theta(1,\theta) = |\cos\theta| - |\sin\theta|\]
for all $\theta$. As this function is continuous, even, and $\pi$-periodic, it follows that $W(1,\theta)$ is $C^1$, odd, and $\pi$-periodic.

The Fourier series of $|\cos\theta|$ and $|\sin\theta|$ can be calculated to
\[|\cos\theta| = \frac{2}{\pi} + \frac{4}{\pi}\sum_{n=1}^\infty\frac{(-1)^{n-1}}{4n^2 - 1}\cos(2n\theta),\quad |\sin\theta| = \frac{2}{\pi} - \frac{4}{\pi}\sum_{n=1}^\infty\frac{1}{4n^2 - 1}\cos(2n\theta),\]
so
\begin{align*}
W_\theta(1,\theta)
	&= |\cos\theta| - |\sin\theta|\\
	&= \frac{4}{\pi}\sum_{n=1}^\infty\frac{(-1)^{n-1} + 1}{4n^2 - 1}\cos(2n\theta)\\
	&= \frac{8}{\pi}\sum_{n=1}^\infty\frac{\cos(2(2n-1)\theta)}{4(2n-1)^2 - 1}\\
	&= \frac{8}{\pi}\sum_{n=1}^\infty\frac{\cos(m_n\theta)}{m_n^2 - 1}
\end{align*}
where $m_n = 4n-2$. From \eqref{eq:Wgeneral_form} it follows that
$W_\theta(r,\theta) = \frac{8}{\pi}\sum_{n=1}^\infty\frac{r^{m_n^2}}{m_n^2 - 1}\cos(m_n\theta)$ and
\[w(r\cos\theta,r\sin\theta) = W(r,\theta) = \frac{8}{\pi}\sum_{n=1}^\infty\frac{r^{m_n^2}}{(m_n^2 - 1)m_n}\sin(m_n\theta).\]
There is no integration constant because the average of $W(1,\theta)$ is zero. This is important. The strategy would not have worked if the hodograph transform is taken over $\Omega_1$ alone. 

When transforming back and letting $\bfg\colon S\to B$ be the inverse of $\nabla w^\top = \bff$ -- that is, $\bfg(\bfx) = \nabla u^\top(\bfx)$. Then $\bff(\bfp) = \bfx$ if and only if $\bfp = \bfg(\bfx)$ and the identity $w(\bfp) = \bfp^\top\bff(\bfp) - u(\bff(\bfp))$ yields
\begin{equation}\label{eq:derive_u}
u(\bfx) = \bfx^\top\bfg(\bfx) - w(\bfg(\bfx)).
\end{equation}
Finally, one may observe that this is a critical value of the function
\[\begin{bmatrix}
r\\ \theta
\end{bmatrix} = \bfr \mapsto f_\bfx(\bfr) := \bfx^\top\Phi(\bfr) - w(\Phi(\bfr)),\qquad \Phi(r,\theta) = \begin{bmatrix}
r\cos\theta\\ r\sin\theta
\end{bmatrix}.\]
Indeed, the critical point of $f_\bfx$ is $\bfr$ such that $\nabla w(\Phi(\bfr)) = \bfx^\top$. Equivalently, $\Phi(\bfr) = \bfg(\bfx)$. Plugging this value back into $f_\bfx$, yields $u(\bfx)$.
When restricted to $0\leq\theta\leq \pi/2$, the graph of $f_\bfx$ is a saddle and I therefore assumed that the critical point is a minimax for every parameter $\bfx\in\Omega_1$.

It is possible that the approach taken can serve as a general method to construct solutions of $\Delta_\infty u = 0$. Note that \eqref{eq:derive_u} should be $\infty$-harmonic for \emph{any} particular solution $w$ of the hodograph equation \eqref{eq:derive_w} provided $\nabla w$ is locally invertible.

\paragraph{Acknowledgements:}
The final draft of the manuscript was written at Institute Mittag-Leffler in Djursholm, Sweden, during the program \emph{Geometric Aspects of Nonlinear Partial Differential Equations} in the fall of 2022. Supported by the Swedish Research Council under the grant no. 2016-06596.

A special thanks goes to Peter Lindqvist.

\bibliographystyle{alpha}
\bibliography{/Users/karlkb/Documents/references.bib}


\end{document}